\newtheorem{theorem}{Theorem}[section]
\newtheorem{conjecture}[theorem]{Conjecture}
\newtheorem{corollary}[theorem]{Corollary}
\newtheorem{definition}[theorem]{Definition}
\newtheorem{lemma}[theorem]{Lemma}
\newtheorem{notation}[theorem]{Notation}
\newtheorem{proposition}[theorem]{Proposition}
\newtheorem{remark}[theorem]{Remark}
\DeclareMathOperator{\card}{card}
\DeclareMathOperator{\im}{Im}
\DeclareMathOperator{\depth}{depth}
\newcommand{\Z}{\mathbb Z}
\newcommand{\N}{\mathbb N}
\DeclareMathOperator{\F}{F} 
\DeclareMathOperator{\T}{T} 
\newcommand{\fc}{\mathcal F}
\newcommand{\wh}[1]{\skew{4}\widehat{\widehat{#1}}} 
\newcommand{\vs}[1]{\langle #1 \rangle}
\DeclareMathOperator{\genus}{genus}
\begin{document}

\title{Gapsets and numerical semigroups}

\author{Shalom Eliahou and Jean Fromentin}

\address{Shalom Eliahou, Univ. Littoral C\^ote d'Opale, EA 2597 - LMPA - Laboratoire de Math\'ematiques Pures et Appliqu\'ees Joseph Liouville, F-62228 Calais, France and CNRS, FR 2956, France} \email{eliahou@univ-littoral.fr}

\address{Jean Fromentin, Univ. Littoral C\^ote d'Opale, EA 2597 - LMPA - Laboratoire de Math\'ematiques Pures et Appliqu\'ees Joseph Liouville, F-62228 Calais, France and CNRS, FR 2956, France} \email{fromentin@math.cnrs.fr}

\maketitle

\begin{abstract} For $g \ge 0$, let $n_g$ denote the number of numerical semigroups of genus $g$. A conjecture by Maria Bras-Amor\'os in 2008 states that the inequality $n_{g} \ge n_{g-1}+n_{g-2}$ should hold for all $g \ge 2$. Here we show that such an inequality holds for the very large subtree of numerical semigroups satisfying $c \le 3m$, where $c$ and $m$ are the conductor and multiplicity, respectively. Our proof is given in the more flexible setting of \emph{gapsets}, i.e. complements in $\N$ of numerical semigroups.
\end{abstract}

\section{Introduction}
Denote $\N=\{0,1,2,3,\dots\}$ and $\N_+=\N\setminus \{0\}=\{1,2,3,\dots\}$. For $a,b \in \Z$, let $[a,b]=\{z \in \Z \mid a \le z \le b\}$ and $[a,\infty[=\{z \in \Z \mid a \le z\}$ denote the integer intervals they span. A \emph{numerical semigroup} is a subset $S \subseteq \N$ containing $0$, stable under addition and with finite complement in $\N$. Equivalently, it is a subset $S \subseteq \N$ of the form $S = \vs{a_1,\dots,a_n}=\N a_1 + \dots + \N a_n$ for some globally coprime positive integers $a_1,\dots,a_n$.

\smallskip
For a numerical semigroup $S \subseteq \N$, its \emph{gaps} are the elements of $\N \setminus S$, its \emph{genus} is $g=|\N \setminus S|$, its \emph{multiplicity} is $m = \min S \setminus \{0\}$, its \emph{Frobenius number} is $f = \max \Z\setminus S$, its \emph{conductor} is $c=f+1$, and its \emph{embedding dimension}, usually denoted $e$, is the least number of generators of $S$, \emph{i.e.} the least $n$ such that $S = \vs{a_1,\dots,a_n}$. Note that the conductor $c$ of $S$ satisfies $c+\N \subseteq S$, and is minimal with respect to this property since $c-1=f \notin S$.

\smallskip
Given $g \ge 0$, the number $n_g$ of numerical semigroups of genus $g$ is finite, as easily seen. The values of $n_g$ for $g=0,\dots,15$ are as follows:
$$
1,1,2,4,7,12,23,39,67,118,204,343,592,1001,1693,2857.
$$ 
In 2008, Maria Bras-Amor\'os made some remarkable conjectures concerning the growth of $n_g$. In particular, she conjectured that 
\begin{equation}\label{strong conjecture}
n_g  \,\ge\, n_{g-1}+n_{g-2}
\end{equation}
for all $g \ge 2$. This conjecture is widely open. Indeed, even the weaker inequality $n_g \,\ge\,  n_{g-1}$, whose validity has been settled by Alex Zhai \cite{Z} for all sufficiently large $g$, remains to be proved for all $g \ge 1$. In that same paper, Zhai showed that `most' numerical semigroups $S$ satisfy $c \le 3m$, where $c$ and $m$ are the conductor and multiplicity of $S$, respectively. For a more precise statement, let us denote
\begin{eqnarray*}
n'_g & = & \mbox{the number of numerical semigroups } S \\  
& & \mbox{of genus $g$ satisfying } c \le 3m.
\end{eqnarray*}
Zhai showed then that $\lim_{g \to \infty} n'_g/n_g =1$, as had been earlier conjectured by Yufei Zhao \cite{Zhao}. The values of $n_g'$ for $g=0,\dots,15$ are as follows:
$$
1,1,2,4,6,11,20,33,57,99,168,287,487,824,1395,2351.
$$ 

In this paper, we show that the conjectured inequality \eqref{strong conjecture} holds for $n'_g$. Even more so, we shall prove the following bounds on $n'_g$ for all $g \ge 3$:
\begin{equation}\label{bounds}
n'_{g-1}+n'_{g-2} \,\le\, n'_{g} \,\le\, n'_{g-1}+n'_{g-2}+n'_{g-3},
\end{equation}
the strongest partial result so far towards \eqref{strong conjecture}.

\smallskip
The contents of this paper are as follows. In Section~\ref{section background}, we recall the necessary background, including the tree of numerical semigroups, and we introduce the \emph{depth}, a key parameter for numerical semigroups which had no specific name yet. In Section~\ref{section gapsets}, we introduce \emph{gapsets}, i.e. complements in $\N$ of numerical semigroups, and \emph{gapset filtrations}. We also introduce $m$-extensions and $m$-filtrations to facilitate their study. In Section~\ref{section q <= 2}, we consider the case of depth at most $2$, i.e. where $c \le 2m$. Sections~\ref{section lower} and~\ref{section upper} are the heart of the paper, where we use the setting of gapsets to establish the left and right inequalities in \eqref{bounds}, respectively. In Section~\ref{section graph}, we show that the tree of numerical semigroups may be naturally embedded in a richer graph whose new edges played a key role towards establishing \eqref{bounds}. Finally, in Section~\ref{section further} we propose some related conjectures and announce some forthcoming results on gapsets of small multiplicity. An Appendix gives the exact values of $n'_g$ for $g = 1, \dots, 60$.

\section{Background}\label{section background}
\smallskip
Numerical semigroups $S \subseteq \N$ may be defined in two equivalent yet quite distinct ways.
\begin{definition}\hspace{0mm}
\begin{enumerate}
\item As cofinite submonoids of $\N$. That is, as subsets $S \subseteq \N$ containing $0$, stable under addition and with finite complement $\N \setminus S$.
\item As subsets of $\N$ of the form $S = \vs{a_1,\dots,a_n}=\N a_1 + \dots + \N a_n$, where $a_1,\dots,a_n \in \N_+$ and $\gcd(a_1,\dots,a_n)=1$.
\end{enumerate}
\end{definition}

For most numerical semigroups, going from one description to the other one is computationally costly. That is, the description of $S$ as $S = \vs{a_1,\dots,a_n}$ does not easily yield $\N \setminus S$ - think of the Frobenius problem -  nor conversely.

\subsection{Counting numerical semigroups by genus} 

Given $g \ge 0$, the number $n_g$ of numerical semigroups of genus $g$ is finite, as easily seen. As mentioned above, the values of $n_g$ for $g=0,\dots,15$ are as follows:
$$
1,1,2,4,7,12,23,39,67,118,204,343,592,1001,1693,2857.
$$ 
In 2006, Maria Bras-Amor\'os pushed the computation of this sequence up to $g=50$ and came up with beautiful conjectures about its growth~\cite{Br08}.
\begin{conjecture} The following probably hold.
\begin{align}
& \hspace{0.1cm} n_{g} \, \ge\,  n_{g-1} + n_{g-2} \hspace{0.5cm} \textrm{ for all } g \ge 2,  \hspace{5cm}\label{c1}\\
& \lim_{g \to \infty} (n_{g-1} + n_{g-2})/n_{g}  =  1 \label{c2}, \\
& \lim_{g \to \infty} n_{g}/n_{g-1} = (1+\sqrt{5})/2. \label{c3}
\end{align}
\end{conjecture}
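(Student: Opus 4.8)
The plan is to work throughout in the language of gapsets and to stratify the full count by \emph{depth} $q=\lceil c/m\rceil$: write $n_g=\sum_{q\ge 1} n_g^{(q)}$, where $n_g^{(q)}$ counts genus-$g$ gapsets of depth exactly $q$, so that $n'_g=n_g^{(1)}+n_g^{(2)}+n_g^{(3)}$ is the depth-$\le 3$ count already governed by the bounds \eqref{bounds}. I would then recast the three assertions \eqref{c1}--\eqref{c3} as statements about the deviation $D_g:=n_g-n_{g-1}-n_{g-2}$ and the ratio $r_g:=n_g/n_{g-1}$, and split each of these, via the depth stratification, into a depth-$\le 3$ part that is controlled by the present paper and a \emph{deep} part coming from $c>3m$.

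For the asymptotic assertions \eqref{c2} and \eqref{c3}, the first step is to invoke $n'_g/n_g\to 1$, so that both limits are governed by the depth-$\le 3$ count $n'_g$ alone. Here a subtlety must be confronted: the bounds \eqref{bounds} pin the exponential growth rate of $n'_g$ only to the interval $[\varphi,\tau]$, where $\varphi=(1+\sqrt5)/2$ and $\tau\approx 1.839$ is the tribonacci constant, and therefore do \emph{not} by themselves force the ratio to $\varphi$. The extra ingredient I would supply is that the third-block contribution is asymptotically negligible, i.e.
\[
\frac{n'_g-n'_{g-1}-n'_{g-2}}{n'_g}\longrightarrow 0 .
\]
To obtain this I would build a transfer-operator, or equivalently generating-function, analysis of the gapset-filtration tree, showing that among depth-$\le 3$ gapsets those arising as a genuine extension by a full third block---the very gapsets responsible for the $n'_{g-3}$ term on the right of \eqref{bounds}---constitute a vanishing fraction. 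Once the growth rate is squeezed to $\varphi$, assertion \eqref{c3} follows, and \eqref{c2} follows from it because $\varphi^{-1}+\varphi^{-2}=1$.

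The genuinely open part is the exact Fibonacci inequality \eqref{c1} for \emph{every} $g\ge 2$. Writing $n''_g=n_g-n'_g$ for the deep count, one has
\[
D_g=\bigl(n'_g-n'_{g-1}-n'_{g-2}\bigr)+\bigl(n''_g-n''_{g-1}-n''_{g-2}\bigr),
\]
and the present paper already delivers the first bracket as nonnegative. The whole difficulty is therefore concentrated in the second bracket: one must prove that the deep stratum $c>3m$ cannot destroy the inequality. My plan is to push the $m$-extension and $m$-filtration injections that succeed for $c\le 3m$ to arbitrary depth, producing an explicit injection of the disjoint union of the genus-$(g-1)$ and genus-$(g-2)$ deep gapsets into the genus-$g$ deep gapsets. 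The main obstacle lies exactly here: the deep stratum admits no clean parametrization, the block-filtration constraints become increasingly entangled as the depth grows, and the clean bijections available in depth $\le 3$ have no evident analogue beyond the third block. Controlling this deviation uniformly over all depths---or, failing that, bounding $n''_g$ tightly enough that any Fibonacci deficit it could contribute is absorbed by the surplus already present in the depth-$\le 3$ part---is the crux on which a complete proof of \eqref{c1}, and hence of the full conjecture, ultimately rests.
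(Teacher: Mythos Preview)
The statement is presented in the paper as a \emph{Conjecture}, not a theorem; there is no proof in the paper to compare against, and \eqref{c1} is explicitly declared ``widely open.''

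For context: parts \eqref{c2} and \eqref{c3} are theorems of Zhai~\cite{Z}, established by methods outside the scope of this paper. Your sketch for these parts is correct in spirit---you rightly observe that the bounds \eqref{bounds} by themselves only trap the growth rate of $n'_g$ in the interval $[\varphi,\tau]$ with $\tau$ the tribonacci constant, and hence do not force the rate to $\varphi$---but the ``transfer-operator / generating-function analysis'' you invoke to close that gap is not carried out, and amounts to reproving what Zhai already did.

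For \eqref{c1} your proposal is candid but is not a proof. You correctly isolate the obstruction to extending the injections $\alpha_1,\alpha_2$ beyond depth $3$, and indeed the paper itself exhibits the failure: the remark following Proposition~\ref{alpha} shows that applying $\alpha_1$ to the depth-$4$ filtration $(1)^4$ produces $(12)(1)^3$, which is not a gapset filtration. Your closing paragraph is an accurate description of where the difficulty lies, not a resolution of it; neither the paper nor your proposal establishes \eqref{c1} for all $g\ge 2$.
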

As \eqref{c1} is still widely open, a weaker version has been formally proposed, possibly first in \cite{K1}, even though the problem was already informally mentioned in~\cite{Zhao} for instance. 

\begin{conjecture}\label{simple growth} The inequality $n_{g} \ge n_{g-1}$ should hold for all $g \ge 1$.
\end{conjecture}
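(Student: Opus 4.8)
Proving $n_g\ge n_{g-1}$ for all $g\ge 1$ is a well-known open problem, as the paper stresses, so what follows is a strategy --- the one that this paper's results carry out on a large subtree.

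\textbf{Reformulation via the tree.} I would first rephrase the inequality using the tree of numerical semigroups of Section~\ref{section background}. The parent of a numerical semigroup $T\ne\N$ is $T\cup\{f(T)\}$, a numerical semigroup of genus $\genus(T)-1$; conversely the children of $S$ are the sets $S\setminus\{a\}$ with $a$ ranging over the \emph{effective generators} of $S$, i.e. the minimal generators $a>f(S)$. Hence
\begin{equation*}
n_g \;=\; \sum_{\genus(S)=g-1} e^*(S), \qquad e^*(S):=\#\{\text{effective generators of }S\},
\end{equation*}
and $n_g\ge n_{g-1}$ is equivalent to the existence of a genus-increasing injection $\Phi$ from the numerical semigroups of genus $g-1$ to those of genus $g$; equivalently, a genus-$(g-1)$ semigroup should have on average at least one child.

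\textbf{Building $\Phi$ by depth.} The plan is to build $\Phi$ combinatorially at the level of gapsets $G=\N\setminus S$, stratified by the depth $q=\lceil c/m\rceil$. For $q\le 2$ the gapset is rigid enough that an explicit injection is immediate (Section~\ref{section q <= 2}). For $q=3$ one uses gapset filtrations and the $m$-extension formalism of Section~\ref{section gapsets}: such a $G$ splits into blocks indexed by residues mod $m$, and one can enlarge the block decomposition in a reversible way that raises the genus by exactly one. In this regime one can in fact do better and prove the two-term bound $n'_{g-1}+n'_{g-2}\le n'_g$ --- the business of Sections~\ref{section lower} and~\ref{section upper} --- which already yields \eqref{strong conjecture} for the subtree $c\le 3m$, the companion bound $n'_g\le n'_{g-1}+n'_{g-2}+n'_{g-3}$ pinning it down further.

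\textbf{The main obstacle.} The hard part is the deep region $c>3m$. By Zhai's theorem $n'_g/n_g\to 1$, so deep semigroups are asymptotically negligible, yet they occur for every $g$ past a small threshold and contain essentially all the \emph{leaves} (the $S$ with $e^*(S)=0$) --- precisely the points where every obvious candidate for $\Phi$, such as ``delete the smallest effective generator'' or ``shift a gap downward'', is either undefined or fails injectivity. There is at present no structure theory for deep gapsets comparable to the filtration/block description that drives the $q\le 3$ case, so a proof valid for \emph{all} $g$ appears to require either a genuinely depth-uniform construction of $\Phi$, or a counting argument showing that the surplus of children produced by shallow semigroups always absorbs the deficit coming from deep leaves. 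Neither is within reach of the methods developed here, which is exactly why this paper confines its analysis to $c\le 3m$.
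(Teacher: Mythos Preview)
You correctly recognize that this statement is a \emph{conjecture} in the paper, not a theorem: the paper offers no proof of $n_g \ge n_{g-1}$ for all $g \ge 1$, and explicitly flags it as open. Your write-up is therefore not a proof but a strategic summary, which is the appropriate response here. The strategy you sketch --- recasting the problem as constructing a genus-increasing injection on gapsets, handling small depth via the filtration machinery of Sections~\ref{section q <= 2}--\ref{section upper}, and isolating depth $q \ge 4$ as the obstruction --- is exactly the shape of the paper's partial progress, and your diagnosis that the methods do not extend beyond $q \le 3$ matches the explicit counterexamples the paper gives after Proposition~\ref{alpha} and after Corollary~\ref{cor max}.

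One minor caveat: your remark that deep semigroups ``contain essentially all the leaves'' is not something the paper asserts, and it is not literally true (e.g.\ $\langle 3,4\rangle$ is a leaf of depth $2$). The genuine obstruction, as the paper presents it, is simply that the injections $\alpha_1,\alpha_2$ cease to preserve the gapset property once $q \ge 4$, independently of where the leaves happen to sit.
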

Zhai~\cite{Z} showed that $n_{g} \ge n_{g-1}$ does indeed hold for all sufficiently large $g$, but whether it holds for all $g \ge 1$ remains open at the time of writing. See \cite{K2} for a nice survey on related questions.

\subsection{The depth} 
\begin{definition}
Let $S$ be a numerical semigroup of multiplicity $m$ and conductor $c$. We call \emph{depth} of $S$ the integer $q = \lceil c/m \rceil$, and we denote it by $\depth(S)$.
\end{definition}

The only numerical semigroup of depth $0$ is $S=\N$. Since $c \ge m$ if $S \not=\N$, the numerical semigroups of depth 1 are exactly those for which $c=m$, i.e. those of the form
$$
S = \{0\} \cup [m, \infty[
$$
for some $m \ge 2$. These specific numerical semigroups are called \emph{ordinary} in the current literature, but a more appropriate and descriptive term would be \emph{superficial}.

\medskip
The depth is an important parameter of numerical semigroups, even though it wasn't specifically named before the present paper. For instance, among various partial results, Wilf's conjecture has been shown to hold for numerical semigroups of depth $q = 2$ in \cite{K1} and in the more demanding case $q =3$ in \cite{E}. Moroever, near-misses in Wilf's conjecture have been constructed for depth $q \ge 4$ and embedding dimension $3$ in \cite{D}, and for depth $q = 4$ and arbitrary large embedding dimension in \cite{EF}. Zhao showed that the number of numerical semigroups of genus $g$ and depth $q \le 2$ is equal to the Fibonacci number $\F_{g+1}$~\cite{Zhao}. More importantly for this paper, Zhao conjectured in~\cite{Zhao}, and Zhai proved in~\cite{Z}, that `most' numerical semigroups are of depth $q \le 3$. More precisely, that \emph{among all numerical semigroups of genus $g$, the proportion of those of depth $q \le 3$ tends to $1$ as $g$ tends to infinity}. This phenomenon is illustrated in Figure~\ref{F:Cloche} below.

\subsection{The tree of numerical semigroups}
The set of all numerical semigroups may be organized into a tree $\mathcal{T}$, rooted at $\N=\vs{1}$ of genus $0$, and where for all $g \ge 0$, the $g$th level of $\mathcal{T}$ consists of all $n_g$ numerical semigroups of genus $g$. The construction of $\mathcal{T}$ is as follows \cite{RGGJ1, Br08}. Given a numerical semigroup $S$ of genus $g \ge 1$, its \emph{parent} is $\hat{S}= S \cup \{f\}$ where $f$ is the Frobenius number of $S$. Then $\hat{S}$ is also a numerical semigroup, of genus $g-1$. Here are the first five levels of $\mathcal{T}$. 

\hspace{-0.7cm}
\begin{tikzpicture}[scale=1.45]

\coordinate (A) at (4.5,4);

\coordinate (B) at (4.5,3);

\coordinate (C1) at (3,2);
\coordinate (C2) at (6,2);

\coordinate (D1) at (1.5,1);
\coordinate (D2) at (4,1);
\coordinate (D3) at (5.5,1);
\coordinate (D4) at (7,1);

\coordinate (E1) at (-0.4,0);
\coordinate (E2) at (1,0);
\coordinate (E3) at (2.1,0);
\coordinate (E4) at (3.05,0);
\coordinate (E5) at (4,0);
\coordinate (E6) at (5,0);
\coordinate (E7) at (7.5,0);

\draw [fill] (A) circle (0.06) node [right] {$\vs{1}$};

\draw [fill] (B) circle (0.06) node [right] {$\vs{2,3}$};

	\draw [thick] (A)--(B);

\draw [fill] (C1) circle (0.06) node [left] {$\vs{3,4,5}$\,};
\draw [fill] (C2) circle (0.06) node [right] {$\vs{2,5}$};

	\draw [thick] (B)--(C1);
	\draw [thick] (B)--(C2);

\draw [fill] (D1) circle (0.06) node [left] {$\vs{4,5,6,7}$\,};
\draw [fill] (D2) circle (0.06) node [left] {$\vs{3,5,7}$};
\draw [fill] (D3) circle (0.06) node [right] {$\vs{3,4}$};
\draw [fill] (D4) circle (0.06) node [right] {$\vs{2,7}$};

	\draw [thick] (C1)--(D1);
	\draw [thick] (C1)--(D2);
	\draw [thick] (C1)--(D3);
	\draw [thick] (C2)--(D4);
	
\draw [fill] (E1) circle (0.06) node [below] {$\vs{5,6,7,8,9}$};
\draw [fill] (E2) circle (0.06) node [below] {$\vs{4,6,7,9}$};
\draw [fill] (E3) circle (0.06) node [below] {$\vs{4,5,7}$};
\draw [fill] (E4) circle (0.06) node [below] {$\vs{4,5,6}$};
\draw [fill] (E5) circle (0.06) node [below] {$\vs{3,7,8}$};
\draw [fill] (E6) circle (0.06) node [below] {$\vs{3,5}$};
\draw [fill] (E7) circle (0.06) node [below] {$\vs{2,9}$};

	\draw [thick] (D1)--(E1);
	\draw [thick] (D1)--(E2);
	\draw [thick] (D1)--(E3);
	\draw [thick] (D1)--(E4);
	\draw [thick] (D2)--(E5);
	\draw [thick] (D2)--(E6);
	\draw [thick] (D4)--(E7);	

\end{tikzpicture}

This illustrates the data $(n_0,n_1,n_2,n_3,n_4) = (1,1,2,4,7)$ given earlier.

\medskip
As an illustration of Zhai's result that `most' numerical semigroups are of depth $q \le 3$, Figure~\ref{12 levels} displays the first $12$ levels of $\mathcal{T}$, where the numerical semigroups of depth $q \le 3$ and $q \ge 4$ are represented by black dots and smaller gray dots, respectively. The bottom line consists of $n_{11}=343$ dots, among which there are $287$ black ones.

\begin{center}
\begin{figure}
\input{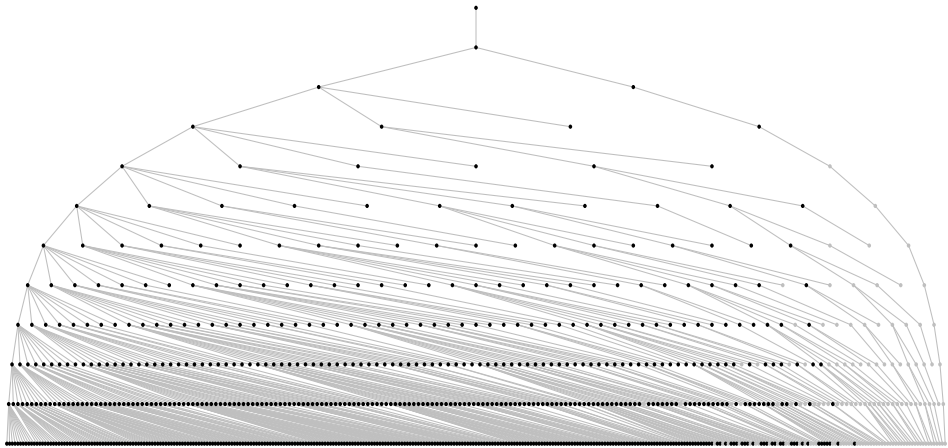}
\caption{The first $12$ levels of $\mathcal{T}$. Black dots correspond to depth $q \le 3$.}
\label{F:Cloche}
\label{12 levels}
\end{figure}
\end{center}

\section{Gapsets}\label{section gapsets}
\begin{definition} A \emph{gapset} is a finite set $G \subset \N_+$ satisfying the following property: for all $z \in G$, if $z=x+y$ with $x,y \in \N_+$, then $x \in G$ or $y \in G$.
\end{definition}
Notice the similarity of this definition with that of a prime ideal $P$ in a ring $R$, where for any $z\in P$, any decomposition $z=xy$ with $x,y \in R$ implies $x \in P$ or $y \in P$. 

\begin{remark} It follows from the definition that a gapset $G$ is nothing else than the set of gaps of a numerical semigroup $S$, where $S = \N \setminus G$.
\end{remark}

One of our purposes here is to show that thinking in terms of gapsets rather than numerical semigroups proper may lead to advances on the latter. In particular, this is what originally led us to the present partial results on the Bras-Amor\'os conjecture $n_{g} \ge n_{g-1}+n_{g-2}$. Indeed, as will become clear in this paper, gapsets may be manipulated and transformed in ways which are not so conveniently expressible on the level of numerical semigroups.

\smallskip
We now transfer in a natural way some terminology from numerical semigroups to gapsets.

\begin{definition} Let $G \subset \N_+$ be a gapset. The \emph{multiplicity} of $G$ is the smallest integer $m \ge 1$ such that $m \notin G$, the \emph{Frobenius number} of $G$ is  $f = \max G$ if $G \not=\emptyset$ and $f=-1$  otherwise, the \emph{conductor} of $G$ is $c=f+1$, the \emph{genus} of $G$ is $g=\card(G)$ and the \emph{depth} of $G$ is $q=\lceil c/m \rceil$.
\end{definition}

That is, the multiplicity, Frobenius number, conductor, genus and depth of a nonempty gapset $G$ coincide with the corresponding numbers of its associated numerical semigroup $S = \N \setminus G$. 

\subsection{Revisiting $\mathcal{T}$ in terms of gapsets}

In order to reconstruct the tree $\mathcal{T}$ of numerical semigroups in the setting of gapsets, we need the following lemma.

\begin{lemma}\label{initial} Every initial segment of a gapset is a gapset. 
\end{lemma}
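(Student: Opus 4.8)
The plan is to unwind the definitions and exploit the one structural feature of the gapset axiom that makes the statement almost immediate: the condition imposed on an element $z$ constrains only elements \emph{strictly below} $z$. Write $G = \{z_1 < z_2 < \dots < z_g\}$ for the elements of the gapset $G$ listed in increasing order; by an initial segment of $G$ I mean a subset of the form $G' = \{z_1,\dots,z_k\}$ with $0 \le k \le g$, equivalently a subset $G' \subseteq G$ such that $z \in G'$, $z' \in G$ and $z' < z$ together imply $z' \in G'$.

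First I would observe that $G'$ is automatically a finite subset of $\N_+$, so the only thing left to verify is the defining implication of a gapset. So fix $z \in G'$ and suppose $z = x+y$ with $x,y \in \N_+$. Since $y \ge 1$ we get $x = z-y \le z-1 < z$, and symmetrically $y < z$; this is the key elementary remark.

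Next, since $G' \subseteq G$ we have $z \in G$, and $G$ being a gapset yields $x \in G$ or $y \in G$; say $x \in G$, the other case being symmetric. Now $x \in G$, $x < z$ and $z \in G'$, so the initial-segment property forces $x \in G'$. Hence $x \in G'$ or $y \in G'$, which is exactly the gapset condition for $z$ relative to $G'$. As $z \in G'$ was arbitrary, $G'$ is a gapset.

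I do not expect any genuine obstacle here; the entire content is the observation that a decomposition $z = x+y$ into positive parts forces $x,y < z$, so the gapset axiom is ``local below $z$'' and therefore survives truncation to an initial segment. If desired, the same argument can be read on the numerical-semigroup side, where $\N \setminus G'$ is obtained from $\N \setminus G$ by adjoining every integer exceeding $z_k$, but the gapset phrasing above is cleaner and matches the way the tree $\mathcal{T}$ will be reconstructed in the sequel.
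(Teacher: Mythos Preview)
Your proof is correct and essentially identical to the paper's: both fix $z \in G'$, decompose $z=x+y$, use the gapset property of $G$ to get $x \in G$ or $y \in G$, and then observe $x,y < z$ forces the relevant one into $G'$. The only cosmetic difference is that the paper phrases ``initial segment'' as $G' = G \cap [1,t]$ rather than as the first $k$ elements in increasing order, which amounts to the same thing.
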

\begin{proof} Let $G$ be a gapset. Let $t \in \N_+$ and $G'=G \cap [1,t]$. We claim that $G'$ is a gapset. Let $z \in G'$, and assume $z=x+y$ with $1 \le x \le y$. Since $z \in G$, we have $\{x,y\} \cap G \not= \emptyset$, whence $\{x,y\} \cap G' \not= \emptyset$ since $x,y \le z \le t$. Therefore $G'$ is a gapset, as claimed.
\end{proof}

In particular, if $G$ is a nonempty gapset, then $G \setminus \{\max G\}$ is still a gapset. Plainly, designating the latter as the \emph{parent} of the former exactly captures the parenthood in $\mathcal{T}$. This gives the following figure which is much easier to understand than the above classical one.

\bigskip

\hspace{-0.7cm}
\begin{tikzpicture}[scale=1.5]

\coordinate (A) at (4.5,4);

\coordinate (B) at (4.5,3);

\coordinate (C1) at (3,2);
\coordinate (C2) at (6,2);

\coordinate (D1) at (1.5,1);
\coordinate (D2) at (4,1);
\coordinate (D3) at (5.5,1);
\coordinate (D4) at (7,1);

\coordinate (E1) at (-0.4,0);
\coordinate (E2) at (1,0);
\coordinate (E3) at (2,0);
\coordinate (E4) at (3,0);
\coordinate (E5) at (4,0);
\coordinate (E6) at (5,0);
\coordinate (E7) at (7.5,0);

\draw [fill] (A) circle (0.06) node [right] {$\emptyset$};

\draw [fill] (B) circle (0.06) node [right] {\,$1$};

	\draw [thick] (A)--(B);

\draw [fill] (C1) circle (0.06) node [left] {$12$\,\,};
\draw [fill] (C2) circle (0.06) node [right] {\,$13$};

	\draw [thick] (B)--(C1);
	\draw [thick] (B)--(C2);

\draw [fill] (D1) circle (0.06) node [left] {$123$\,\,};
\draw [fill] (D2) circle (0.06) node [left] {$124$};
\draw [fill] (D3) circle (0.06) node [right] {$125$};
\draw [fill] (D4) circle (0.06) node [right] {$135$};

	\draw [thick] (C1)--(D1);
	\draw [thick] (C1)--(D2);
	\draw [thick] (C1)--(D3);
	\draw [thick] (C2)--(D4);
	
\draw [fill] (E1) circle (0.06) node [below] {$1234$};
\draw [fill] (E2) circle (0.06) node [below] {$1235$};
\draw [fill] (E3) circle (0.06) node [below] {$1236$};
\draw [fill] (E4) circle (0.06) node [below] {$1237$};
\draw [fill] (E5) circle (0.06) node [below] {$1245$};
\draw [fill] (E6) circle (0.06) node [below] {$1247$};
\draw [fill] (E7) circle (0.06) node [below] {$1357$};

	\draw [thick] (D1)--(E1);
	\draw [thick] (D1)--(E2);
	\draw [thick] (D1)--(E3);
	\draw [thick] (D1)--(E4);
	\draw [thick] (D2)--(E5);
	\draw [thick] (D2)--(E6);
	\draw [thick] (D4)--(E7);	

\end{tikzpicture}

\medskip
Conversely, the \emph{children} of a gapset $G$ in $\mathcal{T}$  are exactly those gapsets $H$ such that $H = G \sqcup \{a\}$ for some $a > \max G$. This is a finite set, since if $G$ is of multiplicity $m$ and conductor $c$, then any child $H = G \sqcup \{a\}$ of $G$ satisfies $c \le a \le m+c-1$; for if $a \ge c+m$, then $G \sqcup \{a\}$ cannot be a gapset as it contains $a=m+(a-m)$, but neither $m$ nor $a-m$ since $a-m \ge c > \max G$.

\subsection{The canonical partition}

\begin{lemma} Let $G$ be a gapset of multiplicity $m$. Then 
\begin{eqnarray*}
[1,m-1] & \subseteq & G, \\
G \cap m\N & = & \emptyset.
\end{eqnarray*}
\end{lemma}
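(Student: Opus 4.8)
The first containment is essentially immediate from the definition of multiplicity, and the second is a short induction (or minimal-counterexample argument) using the defining property of a gapset. I would present the two assertions in that order.

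First I would recall that, by definition, the multiplicity $m$ of $G$ is the \emph{smallest} positive integer not lying in $G$. Hence every integer $j$ with $1 \le j \le m-1$ must belong to $G$, which is exactly the statement $[1,m-1] \subseteq G$. There is nothing more to do here; in particular $m \notin G$ as well, a fact I will use in the second part.

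For the second assertion, suppose toward a contradiction that $G \cap m\N \neq \emptyset$, and let $k \ge 1$ be minimal with $km \in G$. Since $m \notin G$ we have $k \ge 2$, so both $m$ and $(k-1)m$ are positive integers with $km = m + (k-1)m$. Applying the defining property of a gapset to $z = km \in G$, we get $m \in G$ or $(k-1)m \in G$. The first is false because $m \notin G$; the second is false because $1 \le k-1 < k$ and $k$ was chosen minimal. This contradiction shows no multiple of $m$ lies in $G$, i.e. $G \cap m\N = \emptyset$.

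There is no real obstacle here: the only point requiring a moment's care is ensuring that the decomposition $km = m + (k-1)m$ has both summands in $\N_+$, which is why one records $k \ge 2$ before invoking the gapset axiom. Everything else is a direct unwinding of the definitions.
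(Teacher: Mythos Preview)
Your proof is correct and matches the paper's own argument essentially line for line: the first inclusion follows directly from the definition of multiplicity, and the second uses the decomposition $am = m + (a-1)m$ together with $m \notin G$, with the paper phrasing it as an induction on $a$ and you phrasing the equivalent contrapositive as a minimal-counterexample argument. There is no substantive difference.
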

\begin{proof}  By definition of the multiplicity, $G$ contains $[1,m-1]$ but not $m$. Let $a \ge 2$ be an integer. The formula $am=m+(a-1)m$ and induction on $a$ imply that $am \notin G$.
\end{proof}
 This motivates the following notation and definition.

\begin{notation} Let $G$ be a gapset of multiplicity $m$. We denote $G_0=[1,m-1]$ and, more generally,
\begin{equation}\label{G_i}
G_i = G \cap [im+1,(i+1)m-1] \quad \mbox{ for all } i \ge 0.
\end{equation}
\end{notation}

\begin{proposition}\label{subset} Let $G$ be a gapset of multiplicity $m$ and depth $q$. Let $G_i$ be defined as in \eqref{G_i}. Then
\begin{equation}\label{partition}
G = G_0 \sqcup G_1 \sqcup \dots \sqcup G_{q-1}
\end{equation}
and $G_{q-1} \not= \emptyset$. Moreover $G_{i+1} \subseteq m+G_{i}$ for all $i \ge 0$. 
\end{proposition}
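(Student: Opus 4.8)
The plan is to establish the three assertions in turn, each reducing to an elementary use of the gapset axiom together with the definitions of multiplicity, conductor, and depth. First, the partition \eqref{partition}: since $G \subseteq \N_+$ is finite with $\max G = f = c-1$, and $q = \lceil c/m \rceil$ means $(q-1)m < c \le qm$, every element of $G$ lies in $[1, qm-1]$. The intervals $[im+1, (i+1)m-1]$ for $i = 0, \dots, q-1$ together with the multiples $\{m, 2m, \dots, qm\}$ partition $[1,qm]$, and by the previous lemma $G \cap m\N = \emptyset$; hence $G = G_0 \sqcup G_1 \sqcup \dots \sqcup G_{q-1}$. That $G_{q-1} \neq \emptyset$ follows because $f = c-1 \in G$ and $f \ge (q-1)m + 1$ (as $c > (q-1)m$, so $c - 1 \ge (q-1)m$, and $c-1$ is not a multiple of $m$ since $m \notin G$ forces $c - 1 \neq (q-1)m$ when... actually one checks $f \notin m\N$ directly, so $f \ge (q-1)m+1$), placing $f$ in the block $G_{q-1}$.

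The substantive part is the inclusion $G_{i+1} \subseteq m + G_i$ for all $i \ge 0$. Take $z \in G_{i+1}$, so $(i+1)m + 1 \le z \le (i+2)m - 1$. Write $z = m + (z-m)$ with both summands in $\N_+$; the gapset axiom gives $m \in G$ or $z - m \in G$, and since $m \notin G$ we get $z - m \in G$. Now $z - m$ satisfies $im + 1 \le z - m \le (i+1)m - 1$, i.e. $z - m$ lies in the block indexing $G_i$, so $z - m \in G \cap [im+1, (i+1)m-1] = G_i$. Therefore $z \in m + G_i$, as desired.

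I expect the main obstacle to be purely bookkeeping: one must be careful that the index $i+1$ can range up to $q-1$ (the top nonempty block) and that the argument does not secretly require $i+1 \le q-1$ — in fact it does not, since for $i + 1 \ge q$ the block $G_{i+1}$ is empty and the inclusion holds vacuously, so the statement is uniform in $i \ge 0$. The only other point worth stating cleanly is why $f \notin m\N$, which is immediate from $G \cap m\N = \emptyset$ together with $f \in G$; this pins down that $f$ genuinely sits in $G_{q-1}$ rather than being a multiple of $m$ at the boundary. Beyond that, no inequality is delicate, and the three claims assemble directly.
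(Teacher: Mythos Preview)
Your proposal is correct and follows essentially the same line as the paper: partition via $G \cap m\N = \emptyset$ and the bound $c \le qm$, locate $f$ in $G_{q-1}$ using $f \ge (q-1)m+1$ (since $f \in G$ forces $f \notin m\N$), and derive $G_{i+1} \subseteq m + G_i$ from the gapset axiom applied to $z = m + (z-m)$ with $m \notin G$. The mid-sentence self-correction about $f \notin m\N$ could be tidied, but the argument is the paper's argument.
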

\begin{proof} As $G \cap m\N = \emptyset$, it follows that $G$ is the disjoint union of the $G_i$ for $i \ge 0$. Let $c$ be the conductor of $G$. Then $G \subseteq [1,c-1]$. Since $(q-1)m < c \le qm$ by definition of $q$, it follows that $G_i = \emptyset$ for $i \ge q$, whence \eqref{partition}. Let $f=c-1$. Since $f \in G$ and $f \ge (q-1)m+1$, it follows that $f \in G_{q-1}$. 

It remains to show that $G_{i+1} \subseteq m+G_{i}$ for all $i \ge 0$.
Let $x \in G_{i+1}$. Since $G_{i+1} \subseteq [(i+1)m+1, (i+2)m-1]$, we have 
$$
x-m \in [im+1, (i+1)m-1].  
$$
Now $x-m \in G$ since $x = m + (x-m)$ and $m \notin G$. So $x-m \in G_i$.
\end{proof}
\begin{definition} Let $G$ be a gapset. The \emph{canonical partition} of $G$ is the partition $G = G_0 \sqcup G_1 \sqcup \dots \sqcup G_{q-1}$ given by \eqref{partition}.
\end{definition}

\begin{remark}\label{data gapsets}
The multiplicity $m$, genus $g$ and depth $q$ of a gapset $G$ may be read off from its canonical partition $G = \sqcup_i G_i$ as follows : 
\begin{eqnarray*}
m & = & \max(G_0) +1, \\
g & = & \sum_i |G_i|, \\
q & = & \textrm{the number of nonempty } G_i.
\end{eqnarray*}
\end{remark}

\subsection{On $m$-extensions and $m$-filtrations}
We shall need to consider somewhat more general finite subsets of $\N_+$ than gapsets proper.

\begin{definition} Let $m \in \N_+$. An \emph{$m$-extension} is a finite set $A \subset \N_+$ containing $[1,m-1]$ and admitting a partition
\begin{equation}\label{m-ext}
A = A_0 \sqcup A_1 \sqcup \dots \sqcup A_{t}
\end{equation}
for some $t \ge 0$, where $A_0=[1,m-1]$ and $A_{i+1} \subseteq m+A_i$ for all $i \ge 0$. 
\end{definition}
In particular, an $m$-extension $A$ satisfies $A \cap m\N = \emptyset$. Moreover, the  above conditions on the $A_i$ imply 
\begin{equation}\label{A_i}
A_i=A \cap [im+1, (i+1)m-1]
\end{equation}
for all $i \ge 0$, whence the $A_i$ are \emph{uniquely determined} by $A$.

\begin{remark}\label{gapset is extension}
Every gapset of multiplicity $m$ is an $m$-extension. This follows from Proposition~\ref{subset}.
\end{remark}

\smallskip
Closely linked is the notion of \emph{$m$-filtration}.
\begin{definition}\label{filtration} Let $m \in \N_+$. An \emph{$m$-filtration} is a finite sequence $F=(F_0,F_1,\dots,F_t)$ of nonincreasing subsets of $\N_+$ such that
$$
F_0=[1,m-1] \supseteq F_1 \supseteq \dots \supseteq F_t.
$$ 
\end{definition}
For $m \in \N_+$, there is a straightforward bijection between $m$-extensions and $m$-partitions.

\begin{proposition} Let $A=A_0 \sqcup A_1 \sqcup \dots \sqcup A_{t}$ be an $m$-extension. Set $F_i=-im+A_i$ for all $i$. Then
$(F_0,F_1,\dots,F_t)$ is an $m$-filtration. Conversely, let $(F_0,F_1,\dots,F_t)$ be an $m$-filtration. Set $A_i = im+F_i$ for all $i$, and let $A$ be the union of the $A_i$. Then $A$ is an $m$-extension.
\end{proposition}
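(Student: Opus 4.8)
The plan is to verify the two directions of the claimed bijection directly from the definitions, checking in each case that the defining conditions of an $m$-extension (respectively, an $m$-filtration) are satisfied.

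First I would treat the forward direction. Given an $m$-extension $A = A_0 \sqcup \dots \sqcup A_t$ with $A_0 = [1,m-1]$ and $A_{i+1} \subseteq m + A_i$, set $F_i = -im + A_i$ for all $i$. Then $F_0 = A_0 = [1,m-1]$, so the first requirement holds. For the nesting, observe that $A_{i+1} \subseteq m + A_i$ translates, after subtracting $(i+1)m$ from both sides, into $-(i+1)m + A_{i+1} \subseteq -im + A_i$, that is, $F_{i+1} \subseteq F_i$. One should also note that each $F_i \subseteq F_0 = [1,m-1] \subseteq \N_+$, which follows immediately from the chain of inclusions. Hence $(F_0, \dots, F_t)$ is an $m$-filtration.

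Next I would treat the converse. Given an $m$-filtration $F_0 = [1,m-1] \supseteq F_1 \supseteq \dots \supseteq F_t$, set $A_i = im + F_i$ and $A = \bigcup_i A_i$. Then $A_0 = F_0 = [1,m-1]$, so $A$ contains $[1,m-1]$. For $A_{i+1} \subseteq m + A_i$: from $F_{i+1} \subseteq F_i$ we get $(i+1)m + F_{i+1} \subseteq (i+1)m + F_i = m + (im + F_i)$, i.e. $A_{i+1} \subseteq m + A_i$, as required. The one point needing a line of justification is that the union $A = \bigsqcup_i A_i$ is \emph{disjoint}, so that it genuinely provides the partition demanded by the definition of $m$-extension: since $F_i \subseteq [1,m-1]$, we have $A_i = im + F_i \subseteq [im+1, (i+1)m-1]$, and these intervals are pairwise disjoint as $i$ ranges over $\{0,\dots,t\}$; hence so are the $A_i$. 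This also shows $A \cap m\N = \emptyset$ and recovers the characterization $A_i = A \cap [im+1,(i+1)m-1]$, matching \eqref{A_i}.

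Finally I would remark that the two constructions are mutually inverse — substituting $F_i = -im + A_i$ into $A_i' = im + F_i$ returns $A_i' = A_i$, and vice versa — so this is indeed a bijection; but since the proposition as stated only asserts that each construction lands in the appropriate class, strictly speaking this last observation is optional. I do not expect any real obstacle here: the whole content is the observation that translating the blocks $A_i$ back by $im$ converts the "staircase" inclusion $A_{i+1} \subseteq m + A_i$ into the plain nesting $F_{i+1} \subseteq F_i$, and the only thing to be careful about is recording that the blocks of an $m$-extension live in disjoint intervals, which is what makes the union automatically a partition.
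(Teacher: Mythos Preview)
Your proof is correct and follows the same direct-verification approach as the paper; in fact the paper's own proof is a single line (``We have $F_i=-im+A_i$ if and only if $A_i=im+F_i$''), so your version simply makes explicit the routine checks that the paper leaves to the reader.
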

\begin{proof} We have $F_i=-im+A_i$ if and only if $A_i=im+F_i$.
\end{proof}

\begin{notation}\label{phi and tau} If $A$ is an $m$-extension, we denote by $F=\varphi(A)$ its associated $m$-filtration. Conversely, if $F$ is an $m$-filtration, we denote by $A=\tau(F)$ its associated $m$-extension. 
\end{notation}

By the above proposition, \emph{the maps $\varphi$ and $\tau$ are inverse to each other}.

\subsection{Gapset filtrations} 

\begin{definition}
Let $G \subset \N_+$ be a gapset of multiplicity $m$. The \emph{gapset filtration} associated to $G$ is the $m$-filtration $F=\varphi(G)$.
\end{definition}
By Remark~\ref{gapset is extension}, every gapset $G$ of multiplicity $m$ is an $m$-extension, whence $\varphi(G)$ is well-defined. 

\smallskip
Concretely, let $G$ be a gapset of multiplicity $m$ and depth $q$. As in \eqref{G_i}, let $G_i = G \cap [im+1,(i+1)m-1]$ for all $i \ge 0$, so that $G_0=[1,m-1]$ and
$$
G = G_0 \sqcup \dots \sqcup G_{q-1}.
$$
The associated $m$-filtration $F=\varphi(G)$ is then given by $F=(F_0,\dots,F_{q-1})$ where $F_i = -im+G_i$ for all $i \ge 0$.

\smallskip
We now transfer some terminology from gapsets to gapset filtrations.
\begin{definition} Let $F$ be a gapset filtration. Its \emph{multiplicity}, \emph{genus} and \emph{depth} are defined as those of its corresponding gapset $G=\tau(F)$.
\end{definition}

Let $F=(F_0,F_1,\dots,F_{q-1})$ be a gapset filtration, and let $G=\tau(F)=\cup_i G_i$ be the corresponding gapset. It follows from Remark~\ref{data gapsets} and the equality $|F_i|=|G_i|$ for all $i$, that the genus of $F$ is equal to $|F_0|+\dots+|F_{q-1}|$ and that its depth is equal to the number of nonzero $F_i$.

\subsection{An example: the case of genus $6$}
For illustration purposes, here are the 23 numerical semigroups or gapsets of genus 6 given in two different ways.

\medskip
{\tiny $\bullet$} With minimal generators:
{\small
\begin{align*}
& \vs{2,13}; \vs{3,7}; \vs{3,8,13}; \vs{3,10,11}; \vs{4,5}; \vs{4,6,9}; \vs{4,6,11,13}; \vs{4,7,9}; \\
& \vs{4,7,10,13}; \vs{4,9,10,11}; \vs{5,6,7}; \vs{5,6,8}; \vs{5,6,9,13}; \vs{5,7,8,9};  \vs{5,7,8,11}; \\
&  \vs{5,7,9,11,13}; \vs{5,8,9,11,12}; \vs{6,7,8,9,10};  \vs{6,7,8,9,11};  \vs{6,7,8,10,11}; \\
& \vs{6,7,9,10,11}; \vs{6,8,9,10,11,13}; \vs{7,8,9,10,11,12,13}.
\end{align*}
}

{\tiny $\bullet$} With the associated gapset filtration:
{\small
\begin{align*}
& (1)^6; (1 2)^2 (2)^2; (1 2)^2 (1)^2; (1 2)^3; (1 2 3) (2 3) (3); (1 2 3) (1 3) (3); (1 2 3) (1 3)(1); \\
& (1 2 3) (1 2) (2); (1 2 3) (1 2) (1); (1 2 3)^2; (1 2 3 4) (3 4); (1 2 3 4) (2 4); (1 2 3 4) (2 3);\\
& (1 2 3 4) (1)^2; (1 2 3 4) (1 4); (1 2 3 4) (1 3); (1 2 3 4) (1 2); (1 2 3 4 5) (5); (1 2 3 4 5) (4); \\
& (1 2 3 4 5) (3); (1 2 3 4 5) (2);  (1 2 3 4 5) (1); (1 2 3 4 5 6).
\end{align*}
}

The above notation is self-explanatory. For instance, the third element $(1 2)^2 (1)^2$ represents the filtration $F=(\{1,2\}, \{1,2\},\{1\},\{1\})$ of multiplicity $3$, corresponding to the gapset $G=\{1,2\} \cup \{4,5\} \cup \{7\} \cup \{10\}$ and the numerical semigroup $S = \N \setminus G = \{0,3,6,8, 9,11,12,13,\dots\} = \vs{3,8,13}$.

\begin{remark} Both descriptions of a numerical semigroup $S$, namely with minimal generators and with the associated gapset filtration, reveal its multiplicity $m$. The first one reveals its embedding dimension $e$, while the second one reveals its Frobenius number $f$, its genus $g$ and its depth $q$.
\end{remark}

\subsection{Some notation}

We shall denote by $\Gamma$ the set of all gapsets, and by $\Gamma(g)$ the subset of all gapsets of genus $g$. Similarly, we shall denote by $\fc$ the set of all gapset filtrations, and by $\fc(g)$ the subset of all gapset filtrations of genus $g$. Of course, the above maps  $\varphi$ and $\tau$ provide bijections between $\Gamma(g)$ and $\fc(g)$ for all $g \ge 0$. Thus, we have
$$
n_g \, =\,  |\Gamma(g)| \, =\, |\fc(g)|
$$
for all $g \ge 0$.

Furthermore, given $b \in \N_+$, we shall denote by $\Gamma(q \le b)$ the subset of all gapsets of depth $q \le b$, and by $\fc(q \le b)$ the corresponding subset for gapset filtrations. For a fixed genus, we shall denote by $\Gamma(g, q \le b)$ and $\fc(g,q \le b)$ the subsets of $\Gamma(q \le b)$ and $\fc(q \le b)$ of elements of genus $g$, respectively.

The case $b=3$ is of special importance here. Thus, as in the Introduction, we set
$$
n_g' \, =\,  |\Gamma(g, q \le 3)| \, =\, |\fc(g, q \le 3)|
$$
for all $g \ge 0$.

\section{The case $q \le 2$}\label{section q <= 2}

Zhao established that \emph{the number of numerical semigroups of genus $g \ge 0$ and depth $q \le 2$ is equal to the Fibonacci number $\F_{g+1}$}~\cite{Zhao}. His proof, based on counting arguments, uses formulas expressing the Fibonacci numbers as sums of binomial coefficients. Here we prove Zhao's result bijectively, without counting.

\begin{lemma}\label{filtrations of depth 2} Let $m \ge 1$. Every $m$-filtration $(F_0,F_1)$ is a gapset filtration of multiplicity $m$ and depth $q \le 2$, and conversely.
\end{lemma}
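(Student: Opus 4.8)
The statement has two directions, both essentially unwrapping definitions, so the plan is to verify each inclusion carefully rather than to find any clever trick. For the forward direction, let $(F_0,F_1)$ be an arbitrary $m$-filtration, so $F_0=[1,m-1]\supseteq F_1$. Apply the bijection $\tau$ to get the $m$-extension $A=\tau(F_0,F_1)=A_0\sqcup A_1$ with $A_0=[1,m-1]$ and $A_1=m+F_1\subseteq m+A_0$. I must show $A$ is a gapset, i.e. that for every $z\in A$ with $z=x+y$, $x,y\in\N_+$, at least one of $x,y$ lies in $A$. The key observation is that $A\subseteq[1,2m-1]$, so $z\le 2m-1$, and if $z=x+y$ with $1\le x\le y$ then $x\le (2m-1)/2<m$, hence $x\in[1,m-1]=A_0\subseteq A$. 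So the gapset condition holds trivially; this is the heart of why depth $\le 2$ is so rigid. Then I record that the multiplicity of $A$ is $\max(A_0)+1=m$ (using Remark~\ref{data gapsets}), that $A=A_0$ has depth $1$ if $F_1=\emptyset$ and depth $2$ otherwise, so in all cases $q\le 2$.

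For the converse, let $G$ be a gapset of multiplicity $m$ and depth $q\le 2$. By Remark~\ref{gapset is extension} (or Proposition~\ref{subset}), $G$ is an $m$-extension, and its canonical partition is $G=G_0$ or $G=G_0\sqcup G_1$ according as $q=1$ or $q=2$; in the first case set $G_1=\emptyset$. Then $F=\varphi(G)=(F_0,F_1)$ with $F_0=[1,m-1]$ and $F_1=-m+G_1$. Proposition~\ref{subset} gives $G_1\subseteq m+G_0$, hence $F_1\subseteq G_0=[1,m-1]=F_0$, so $(F_0,F_1)$ is indeed an $m$-filtration, and by definition it is the gapset filtration associated to $G$. (The degenerate cases $m=1$, forcing $G=\emptyset$, and $G$ of depth $0$, i.e. $G=\emptyset$, should be mentioned in a line: then the only filtration is $([1,0])=(\emptyset)$, vacuously fine.)

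I do not expect any genuine obstacle here; the only thing requiring care is bookkeeping about which objects are $m$-filtrations versus gapset filtrations versus $m$-extensions, and making sure the empty/degenerate cases ($m=1$, $F_1=\emptyset$, depth $0$ or $1$) are handled consistently with the conventions fixed in the earlier definitions. The one slightly non-formal point worth stating explicitly is the inequality $z\le 2m-1\Rightarrow \min(x,y)\le m-1$ that makes the prime-like splitting condition automatic; everything else is a direct application of $\varphi$, $\tau$, and Proposition~\ref{subset}. So the write-up should be short, with the forward direction carrying the single substantive observation and the converse being pure definition-chasing via Proposition~\ref{subset}.
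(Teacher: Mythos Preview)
Your proposal is correct and follows essentially the same approach as the paper's own proof: apply $\tau$ to the $m$-filtration, observe that the resulting set lies in $[1,2m-1]$, and use the inequality $\min(x,y)\le m-1$ to verify the gapset condition. The paper's write-up is terser (it splits into the cases $z\in G_0$ and $z\in G_1$ and dispatches the converse in one line as ``by definition''), but your more explicit treatment of the converse via Proposition~\ref{subset} and of the degenerate cases is perfectly fine and arguably clearer.
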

\begin{proof} Let $F=(F_0,F_1)$ be an $m$-filtration. Thus $F_0 =[1,m-1]$ and $F_1 \subseteq F_0$. Let $G=\tau(F) = G_0 \cup G_1$, i.e. $G_0=F_0$ and $G_1 = m+F_1$. Let $z \in G$. Assume $z=x+y$ with $x \le y$ positive integers. If $z \in G_0$ then $x,y \in G_0$. If  $z \in G_1$, then $z \le 2m-1$, whence $x \le m-1$ and so $x \in G_0$. Therefore $G$ is a gapset, of depth $q \le 2$, and $F=\varphi(G)$ is a gapset filtration. The converse holds by definition.
\end{proof}

\begin{proposition} For all $g \ge 2$, we have
\begin{equation}\label{q <= 2}
|\mathcal{F}(g, q \le 2)| \,=\, |\mathcal{F}(g-1, q \le 2)|+|\mathcal{F}(g-2, q \le 2)|.
\end{equation}
\end{proposition}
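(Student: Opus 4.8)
The plan is to exhibit an explicit bijection that matches the Fibonacci recursion. By Lemma~\ref{filtrations of depth 2}, the set $\mathcal{F}(g, q \le 2)$ is in bijection with the set of pairs $(m, F_1)$ where $m \ge 1$ and $F_1 \subseteq [1,m-1]$, subject to the genus constraint $(m-1) + |F_1| = g$. So it suffices to count such pairs. Separate them into two classes according to whether the filtration has depth $1$ or depth $2$, i.e. whether $F_1 = \emptyset$ or $F_1 \ne \emptyset$. The depth-$1$ filtrations of genus $g$: there is exactly one, namely $m = g+1$, $F_1 = \emptyset$ (the superficial gapset $(1)^g$). The depth-$2$ filtrations of genus $g$ are pairs $(m,F_1)$ with $\emptyset \ne F_1 \subseteq [1,m-1]$ and $(m-1)+|F_1| = g$, so $m \le g$, i.e. $m-1 \le g-1$.

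First I would set up the map witnessing the recursion. Given a gapset filtration $F = (F_0, F_1)$ of genus $g$ and depth $\le 2$, with $F_0 = [1,m-1]$: if $\max F_1 = m-1$ (in particular $F_1 \ne \emptyset$, so depth is exactly $2$), send $F$ to the filtration $F' = (F_0', F_1')$ with $F_0' = [1,m-2]$ and $F_1' = F_1 \setminus \{m-1\}$; this lowers $m$ by $1$ and $|F_1|$ by $1$, so the genus drops by $2$, and $F_1' \subseteq [1,m-2]$ is still a valid (possibly empty) subset, giving an element of $\mathcal{F}(g-2, q\le 2)$. Otherwise ($F_1 = \emptyset$, or $F_1 \ne \emptyset$ with $\max F_1 \le m-2$), send $F$ to $F'' = (F_0'', F_1'')$ with $F_0'' = [1,m-2]$ and $F_1'' = F_1$; here $F_1 \subseteq [1,m-2]$ still holds because $\max F_1 \le m-2$, the genus drops by $1$, and we land in $\mathcal{F}(g-1, q \le 2)$. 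I would then check this is a bijection from $\mathcal{F}(g, q\le 2)$ onto $\mathcal{F}(g-1, q\le 2) \sqcup \mathcal{F}(g-2, q\le 2)$ by writing the inverse: an element of $\mathcal{F}(g-1,q\le 2)$ with parameter $m-1$ and filter $F_1 \subseteq [1,m-2]$ pulls back to $([1,m-1], F_1)$ — this has genus $g$ and, since $\max F_1 \le m-2 < m-1$, falls in the "otherwise" branch; an element of $\mathcal{F}(g-2, q\le 2)$ with parameter $m-1$ and filter $F_1' \subseteq [1,m-2]$ pulls back to $([1,m-1], F_1' \cup \{m-1\})$ — this has genus $g$ and is caught by the first branch. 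One must verify these two pullbacks are injective, have disjoint images, and cover all of $\mathcal{F}(g, q\le 2)$ (the trichotomy "$\max F_1 = m-1$" versus "$F_1 \subseteq [1,m-2]$, possibly empty" is exhaustive and exclusive), which is elementary set-theoretic bookkeeping.

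The only place that needs care — and I expect it to be the main obstacle, though a mild one — is the boundary behavior for small $m$: when $m = 1$ the only filtration is the empty one of genus $0$, and when $m = 2$ one has $F_0 = \{1\}$ and $F_1 \in \{\emptyset, \{1\}\}$; I should make sure the map and its inverse are well-defined there and that the hypothesis $g \ge 2$ guarantees we never try to invert into a negative genus. Concretely, for $g \ge 2$ every filtration in $\mathcal{F}(g, q\le 2)$ has $m \ge 2$ (since $m = g+1 \ge 3$ in the depth-$1$ case, and $m \ge 2$ in the depth-$2$ case as $F_0 \supseteq F_1 \ne \emptyset$ forces $m - 1 \ge 1$), so the operation $[1,m-1] \mapsto [1,m-2]$ always makes sense; and genus $g-2 \ge 0$ by hypothesis. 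Once this bijection is in hand, \eqref{q <= 2} follows immediately by comparing cardinalities, and as a corollary (matching Zhao) one gets $|\mathcal{F}(g, q\le 2)| = \F_{g+1}$ by induction from the base cases $|\mathcal{F}(0,q\le 2)| = 1 = \F_1$ and $|\mathcal{F}(1, q\le 2)| = 1 = \F_2$.
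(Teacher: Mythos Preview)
Your proposal is correct and takes essentially the same approach as the paper: the same case split on whether $\max F_1 = m-1$, the same forward maps (remove $m-1$ from $F_0$ alone, or from both $F_0$ and $F_1$), and the same inverse maps (adjoin the new top element to $F_0$ alone, or to both). The only cosmetic differences are that you package the argument as a single bijection with an explicit inverse rather than as two separate inequalities, and you treat $g=2$ uniformly with the general case instead of as a base case.
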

\begin{proof} Let $F \in \mathcal{F}(g, q \le 2)$. Let $m \ge 1$ be its multiplicity. Then $F=(F_0,F_1)$ where $F_0=[1,m-1]$ and $F_1 \subseteq F_0$. We have $g=|F_0|+|F_1|$. 

{\tiny $\bullet$} If $g=0$ then $F_0=F_1=\emptyset$, corresponding to the case $m=1$. Thus $|\mathcal{F}(0, q \le 2)| = 1$. 

{\tiny $\bullet$} If $g=1$ then $F_0=\{1\}$ and $F_1=\emptyset$, so that $|\mathcal{F}(1, q \le 2)| = 1$. 

{\tiny $\bullet$} If $g=2$, then either $F_0=[1,2], F_1=\emptyset$ or else $F_0=F_1=\{1\}$.

Hence $|\mathcal{F}(2, q \le 2)| = 2$ and \eqref{q <= 2} holds for $g =2$.

{\tiny $\bullet$} Assume now $g \ge 3$. As $|F_0|+|F_1|=g$, we have $|F_0| \ge 2$, whence $m \ge 3$ since $F_0=[1,m-1]$.

{\tiny $\bullet$ \hspace{-1.5mm} $\bullet$} If $\max(F_1) \le m-2$, let $F_0'=F_0 \setminus \{m-1\}=[1,m-2]$, $F_1'=F_1$ and $F'=(F_0',F_1')$. Then $F'$ is a gapset filtration by Lemma~\ref{filtrations of depth 2}, of genus $g-1$. That is, $F' \in \mathcal{F}(g-1, q \le 2)$.

{\tiny $\bullet$ \hspace{-1.5mm} $\bullet$} If $\max(F_1) = m-1$, let $F_i''=F_i \setminus \{m-1\}$ for $i=0,1$, and let $F''=(F_0'', F_1'')$. Then $F'' \in \mathcal{F}(g-2, q \le 2)$ by Lemma~\ref{filtrations of depth 2}.

\smallskip
Clearly the maps $F \mapsto F'$ and $F \mapsto F''$, where applicable, are one-to-one, and their respective domains cover the whole of $\mathcal{F}(g, q \le 2)$.  It follows that 
$$
|\mathcal{F}(g, q \le 2)| \,\le\, |\mathcal{F}(g-1, q \le 2)|+|\mathcal{F}(g-2, q \le 2)|.
$$

\smallskip
Conversely, let $F=(F_0,F_1)$ be a gapset filtration of depth $q \le 2$. Let $m$ be its multiplicity, so that $F_0=[1,m-1]$. Let 
$$
\widehat{F_0}=F_0\cup\{m\}, \,\, \widehat{F_1}=F_1\cup\{m\}, \,\, \widehat{F}=(\widehat{F_0},F_1), \,\, \wh{F}=(\widehat{F_0},\widehat{F_1}).
$$
Then both $\widehat{F}$ and $\wh{F}$ are gapset filtrations by Lemma~\ref{filtrations of depth 2}. Moreover, we have $\genus(\widehat{F})=\genus(F)+1$ and $\genus(\wh{F})=\genus(F)+2$. Finally, the maps $F \mapsto \widehat{F}$ and $F \mapsto \wh{F}$ are one-to-one and have disjoint images in $\mathcal{F}(q \le 2)$, since gapset filtrations of the form $\wh{F}$ are \emph{characterized} by the property that their two pieces have the same maximal element $m$. Therefore
$$
|\mathcal{F}(g, q \le 2)| \,\ge\, |\mathcal{F}(g-1, q \le 2)|+|\mathcal{F}(g-2, q \le 2)|
$$
and the proof is complete.
\end{proof}

\begin{corollary}\label{fibo} For all $g \ge 0$, we have $|\mathcal{F}(g, q \le 2)|=\F_{g+1}$, where $\F_n$ denote the $n$th Fibonacci number. 
\end{corollary}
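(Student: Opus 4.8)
The statement is a direct consequence of the preceding Proposition together with the base-case computations already carried out in its proof. The plan is to argue by strong induction on $g$, using the convention $\F_1 = \F_2 = 1$ and $\F_{n} = \F_{n-1} + \F_{n-2}$ for $n \ge 3$.

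First I would record the two base cases. In the course of proving the Proposition it was shown that $|\mathcal{F}(0, q \le 2)| = 1$ (the only $1$-filtration, with $F_0 = F_1 = \emptyset$) and $|\mathcal{F}(1, q \le 2)| = 1$ (here $F_0 = \{1\}$, $F_1 = \emptyset$). Since $\F_1 = \F_2 = 1$, this gives $|\mathcal{F}(0, q \le 2)| = \F_1$ and $|\mathcal{F}(1, q \le 2)| = \F_2$, establishing the claim for $g = 0$ and $g = 1$.

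Next, for the inductive step, fix $g \ge 2$ and assume the formula holds for all smaller genera; in particular $|\mathcal{F}(g-1, q \le 2)| = \F_{g}$ and $|\mathcal{F}(g-2, q \le 2)| = \F_{g-1}$. By the recurrence \eqref{q <= 2} of the Proposition,
$$
|\mathcal{F}(g, q \le 2)| \,=\, |\mathcal{F}(g-1, q \le 2)| + |\mathcal{F}(g-2, q \le 2)| \,=\, \F_{g} + \F_{g-1} \,=\, \F_{g+1},
$$
the last equality being the defining recurrence of the Fibonacci sequence. This closes the induction, and hence $|\mathcal{F}(g, q \le 2)| = \F_{g+1}$ for all $g \ge 0$.

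There is essentially no obstacle here beyond bookkeeping: the only point requiring a moment's care is aligning the indexing convention for $\F_n$ with the base values $|\mathcal{F}(0, q \le 2)| = |\mathcal{F}(1, q \le 2)| = 1$, so that the shift by one ($\F_{g+1}$ rather than $\F_g$) comes out correctly. Since, via the bijection $\varphi$ and $\tau$, $|\mathcal{F}(g, q \le 2)| = |\Gamma(g, q \le 2)|$ equals the number of numerical semigroups of genus $g$ and depth $q \le 2$, this recovers Zhao's result.
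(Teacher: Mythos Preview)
Your proof is correct and follows essentially the same approach as the paper's own proof: verify the base cases $g=0,1$ and then invoke the recurrence \eqref{q <= 2} from the preceding Proposition together with the Fibonacci recurrence to conclude by induction. The paper's proof is simply a more compressed version of what you have written.
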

\begin{proof} The formula holds for $g=0,1$. By \eqref{q <= 2}, the numbers $|\mathcal{F}(g, q \le 2)|$ satisfy the same recurrence relation as the Fibonacci numbers. Hence the formula holds for all $g \ge 0$.
\end{proof}

\section{A lower bound on $n_g'$}\label{section lower}

Recall that we denote by $\fc$ the set of all gapsets, and by $\fc(g)$ the subset of all those of genus $g$. Moreover, given a set $\mathcal{C}$ of conditions, we denote by $\fc(\mathcal{C})$ and $\fc(g,\mathcal{C})$ the subset of elements of $\fc$ and $\fc(g)$ satisfying $\mathcal{C}$, respectively. 

Similar constructions as for $q \le 2$ will work for $q \le 3$. Thus, we shall define two self-maps on $\fc(q \le 3)$ which increase the multiplicity by $1$, and the genus by 1 and 2, respectively. 

\begin{notation} Let $m \in \N_+$, and let $F=(F_0,F_1,F_2)$ be an $m$-filtration of length at most $3$, so that $F_0=[1,m-1] \supseteq F_1 \supseteq F_2$. We denote 
\begin{eqnarray*}
\alpha_1(F) & = & (F_0 \sqcup \{m\}, F_1, F_2), \\
\alpha_2(F) & = & (F_0 \sqcup \{m\}, F_1 \sqcup \{m\}, F_2).
\end{eqnarray*}
\end{notation}
Then both $\alpha_1(F), \alpha_2(F)$ are $(m+1)$-filtrations, as their first part is $[1,m]$.

\begin{proposition}\label{alpha} Let $F=(F_0,F_1,F_2)$ be a gapset filtration of genus $g$. Then $\alpha_1(F), \alpha_2(F)$ are gapset filtrations of genus $g+1, g+2$, respectively.
\end{proposition}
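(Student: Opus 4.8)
The statement to prove is that if $F=(F_0,F_1,F_2)$ is a gapset filtration of genus $g$, then $\alpha_1(F)$ and $\alpha_2(F)$ are gapset filtrations of genus $g+1$ and $g+2$ respectively. The plan is to work on the level of the associated $m$-extensions, i.e.\ the subsets of $\N_+$, and verify directly the gapset axiom (for every $z$ in the set, any decomposition $z=x+y$ with $x,y\in\N_+$ forces $x$ or $y$ into the set). First I would record the setup: $F$ has multiplicity $m$, so $F_0=[1,m-1]$, and the gapset $G=\tau(F)$ has canonical partition $G=G_0\sqcup G_1\sqcup G_2$ with $G_0=[1,m-1]$, $G_1=m+F_1$, $G_2=2m+F_2$. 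Since $\alpha_1(F)$ and $\alpha_2(F)$ are $(m+1)$-filtrations (as noted just before the proposition, their first part is $[1,m]$), it suffices to show their associated $(m+1)$-extensions $H^{(1)}=\tau(\alpha_1(F))$ and $H^{(2)}=\tau(\alpha_2(F))$ are gapsets; the genus claims are then immediate from Remark~\ref{data gapsets}, since $|H^{(1)}|=|G|+1=g+1$ and $|H^{(2)}|=|G|+2=g+2$.

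Next I would write out $H^{(1)}$ and $H^{(2)}$ explicitly. Setting $m'=m+1$, we have $H^{(1)}=H^{(1)}_0\sqcup H^{(1)}_1\sqcup H^{(1)}_2$ with $H^{(1)}_0=[1,m]$, $H^{(1)}_1=m'+F_1$, $H^{(1)}_2=2m'+F_2$, and similarly $H^{(2)}_0=[1,m]$, $H^{(2)}_1=m'+(F_1\sqcup\{m\})$, $H^{(2)}_2=2m'+F_2$. The key elementary observation is that $H^{(j)}$ is obtained from $G$ by a uniform ``shift'': an element $x\in G_i$ (for $i=0,1,2$) corresponds to $x+i\in H^{(j)}$ when $x$ lies in the unshifted part, together with the one or two brand-new elements $m$ (in $H^{(1)}$, resp.\ $m$ and $2m+1$ in $H^{(2)}$). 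More precisely, I would establish the clean characterization: $H^{(1)}=([1,m])\,\cup\,(m+1+F_1)\,\cup\,(2m+2+F_2)$, and then note that for any $z\le m$ the gapset condition is automatic ($z=x+y$ with $x,y\ge1$ forces $x,y\le m-1<m$, hence both in $[1,m-1]\subseteq H^{(1)}$).

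The heart of the argument is the case $z\in H^{(1)}_1\cup H^{(1)}_2$, i.e.\ $z\ge m+1$, with $z=x+y$, $1\le x\le y$. Here I would split on the size of $x$. If $x\le m$, then $x\in[1,m]\subseteq H^{(1)}_0\subseteq H^{(1)}$ and we are done. So assume $x\ge m+1$; then also $y\ge m+1$, and $z=x+y\ge 2m+2$, so $z\in H^{(1)}_2$, say $z=2m+2+w$ with $w\in F_2$. Write $x=m+1+a$, $y=m+1+b$ with $a,b\ge0$, $a+b=w$. I want to conclude $x\in H^{(1)}_1$ or $y\in H^{(1)}_1$, i.e.\ $a\in F_1$ or $b\in F_1$ (the case $a=0$, i.e.\ $x=m+1$, is fine since then $x\in H^{(1)}_1$ because $F_1\supseteq F_2\ni w\ge 0$... here one must be a little careful: if $w=0$ then $a=b=0$ and $x=y=m+1\in H^{(1)}_1$ iff $0\in F_1$, which holds since $F_1$ is nonempty whenever $F_2$ is; actually the cleanest route is to transport the question back to $G$). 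To do that, observe that $2m+w\in G_2$, and $2m+w=(m+a)+(m+b)$ is a decomposition into positive integers (using $a,b\ge0$; if $a=0$ then $m+a=m\notin G$, so this needs handling). To avoid the $a=0$ boundary nuisance, I would instead argue: since $z=2m+2+w\in H^{(1)}$ and $w\in F_2\subseteq F_1$, and the $m$-filtration condition $F_1\supseteq F_2$ together with $a+b=w$ and $\min(a,b)\le w$ — hmm, this still isn't automatic. The genuinely needed input is: \emph{$F_2\ni w$ and $w=a+b$ with $0\le a\le b$ implies $a\in F_1$ or $b\in F_1$}. This is exactly the translate (by $m$, then $+1$-shift) of the gapset property of $G$ at the element $2m+w$, once one checks the boundary case $a=0$ separately (then $b=w\in F_2\subseteq F_1$, done). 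So the argument reduces cleanly to the gapset property of $G$. The case of $\alpha_2$ is identical except that $H^{(2)}_1=m+1+(F_1\cup\{m\})$ also contains $2m+1$; the only extra verification is $z=2m+1$ with $z=x+y$: if $x\le m$ done, else $x=y$ impossible by parity, so $x\le m$ always — trivial. \textbf{The main obstacle} I anticipate is handling these boundary cases ($x=m+1$, or the new element $2m+1$ in $\alpha_2$, or $F_2=\emptyset$ vs.\ $F_2\ne\emptyset$) without the bookkeeping becoming opaque; the cleanest presentation is to phrase everything as: ``every decomposition in $H^{(j)}$ of an element $\ge m+1$ either has a part $\le m$, or descends via the shift to a decomposition in $G$,'' and then invoke that $G$ is a gapset.
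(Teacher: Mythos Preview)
Your proposal is correct and follows essentially the same route as the paper's proof: reduce to the associated $(m+1)$-extensions, handle $z\le m$ and $z\in H_1$ by the trivial bound $x\le m$, and for $z\in H_2$ with $x\ge m+1$ transport the decomposition back to $G_2$ and invoke the gapset property of $G$. The paper streamlines the key step by writing $z'=z-2=(x-1)+(y-1)\in G_2$ directly rather than substituting $x=m+1+a$, $y=m+1+b$; this absorbs your boundary case $a=0$ automatically (since $x-1\ge m\ge 1$ and $G\cap[m,2m-1]=G_1$), so the separate treatment via $F_2\subseteq F_1$ becomes unnecessary.
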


\begin{proof} Let $m$ be the multiplicity of $F$, so that $F_0=[1,m-1]$. Let 
$$G = \tau(F)=G_0 \sqcup G_1 \sqcup G_2$$ 
be the corresponding gapset, i.e. with $G_1=m+F_1$ and $G_2=2m+F_2$. We have $G_1 \subseteq [m+1, 2m-1]$ and $G_2 \subseteq [2m+1, 3m-1]$. 

Let $H=H_0 \sqcup H_1 \sqcup H_2$ be the $(m+1)$-extension corresponding to $\alpha_1(F)$, i.e. $H = \tau(\alpha_1(F))$. Then
$$
\begin{array}{rcl}
H_0 & = & F_0 \sqcup \{m\}=[1,m], \\
H_1 & = & (m+1)+F_1, \\
H_2 & = & 2(m+1)+F_2,
\end{array}
$$
so that $H_1=1+G_1$, $H_2=2+G_2$. It follows that
\begin{eqnarray}
H_1 & \subseteq & [m+2,2m], \label{inclusion 1} \\
H_2 & \subseteq & [2m+3,3m+1]. \label{inclusion 2}
\end{eqnarray}
Note that $|G|=g$ and $|H|=g+1$. We claim that $H$ is a gapset. Let $z \in H$, and assume $z = x+y$ with $x,y$ integers such that $1 \le x \le y$. We need show
\begin{equation}\label{H gapset}
x \in H \textrm{\, or\, } y \in H.
\end{equation}

\noindent
{\tiny $\bullet$}  If $z \in H_0$, then $x,y \in H_0$ as well and we are done.

\noindent
{\tiny $\bullet$}  Assume $z \in H_1$. Then $z \le 2m$ by \eqref{inclusion 1}. Hence $x \le m$, \emph{i.e.} $x \in H_0$ and we are done.

\noindent
{\tiny $\bullet$}  Finally, assume $z \in H_2$. Then $z \le 3m+1$  by \eqref{inclusion 2}. If $x \le m$, then $x \in H_0$ and we are done. Assume now $x \ge m+1$. Then $y \le 2m$ since $z \le 3m+1$.  Consider 
$$z'=z-2=(x-1)+(y-1).$$ 
Then $z' \in G_2$ by construction. Since $G$ is a gapset, it follows that $x-1 \in G$ or $y-1 \in G$. More precisely, since $m \le x-1 \le y-1 \le 2m-1$, we have $x-1 \in G_1$ or $y-1 \in G-1$. Hence $x \in H_1$ or $y \in H_1$ and so \eqref{H gapset} again holds, as desired. 

\smallskip
Let now $H'$ be the $(m+1)$-extension corresponding to the filtration $\alpha_2(F)$, i.e. $H'=\tau(\alpha_2(F))$. Then $H'=H \cup \{2m+1\}$. Since $2m+1 \notin H$, we have $|H'|=|H|+1=g+2$. We have already shown that $H$ is a gapset. In order to show that $H'$ also is, it remains to show that for any integer decomposition $2m+1=x+y$ with $1 \le x \le y$, we have $x \in H'$ or $y \in H'$. But this is easy, since then $x \le m$ and so $x \in H'$. 

\smallskip
We conclude, as claimed, that $\alpha_1(F), \alpha_2(F)$ are gapset filtrations of genus $g+1, g+2$, respectively. Both are of depth $q \le 3$ and multiplicity $m+1$, since they contain $[1,m]$ but not $m+1$.
\end{proof}

Note that the corresponding statement is no longer true in general for depth $q \ge 4$. For instance, $(1)^4$ is a gapset filtration of multiplicity $2$ and depth $4$, but $(12)(1)^3$ is no longer a gapset filtration, since its associated set $G=\tau((12)(1)^3)=\{1,2\} \sqcup \{4\} \sqcup \{7\}\sqcup \{10\}$ contains $10=5+5$ but not $5$ and therefore is not a gapset.

\smallskip
The above result implies that $\alpha_1, \alpha_2$ induce two well-defined injective maps
$$
\alpha_1, \alpha_2 \colon \mathcal{F}(q \le 3)  \to \mathcal{F}(q \le 3).
$$

\begin{proposition} We have $\im(\alpha_1) \cap \im(\alpha_2) = \emptyset$.
\end{proposition}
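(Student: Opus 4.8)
The plan is to distinguish the two images by a single elementary invariant of a gapset filtration, namely whether or not its second block contains the integer that is one less than its multiplicity. Every gapset filtration in $\im(\alpha_1)$ will fail this condition, and every one in $\im(\alpha_2)$ will satisfy it, so the two images are disjoint.

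First I would record the fact already observed at the end of the proof of Proposition~\ref{alpha}: if $F=(F_0,F_1,F_2)$ is a gapset filtration of multiplicity $m$, then both $\alpha_1(F)$ and $\alpha_2(F)$ have first block $[1,m]$, hence multiplicity exactly $\max[1,m]+1=m+1$. In particular the multiplicity of an element of $\im(\alpha_1)$ (resp. $\im(\alpha_2)$) determines the multiplicity $m$ of its preimage, via $m=\mu-1$ where $\mu$ is the multiplicity of the element.

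Next, take an arbitrary $H=(H_0,H_1,H_2)\in\im(\alpha_1)$ and write $H=\alpha_1(F)$ with $F=(F_0,F_1,F_2)$ of multiplicity $m$. Comparing first blocks gives $H_0=F_0\sqcup\{m\}=[1,m]$, so $H$ has multiplicity $\mu=m+1$, while $H_1=F_1\subseteq F_0=[1,m-1]=[1,\mu-2]$; in particular $\mu-1\notin H_1$. Symmetrically, if $H\in\im(\alpha_2)$, say $H=\alpha_2(F)$ with $F$ of multiplicity $m$, then again $\mu=m+1$, whereas $H_1=F_1\sqcup\{m\}=F_1\sqcup\{\mu-1\}$, so $\mu-1\in H_1$. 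These two conclusions on $H_1$ are incompatible, so no $H$ can lie in both images, which is exactly $\im(\alpha_1)\cap\im(\alpha_2)=\emptyset$.

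I do not expect any genuine obstacle: the maps $\alpha_1,\alpha_2$ were built so that precisely this dichotomy holds, in direct analogy with the depth $q\le 2$ case, where filtrations of the form $\wh F$ were characterized by their two blocks sharing the same maximal element $m$. The only point requiring a moment's care is that the multiplicity of $\alpha_i(F)$ is genuinely $m+1$ rather than something smaller, which is immediate from the shape $[1,m]$ of the first block; with that in place, the comparison of the second blocks finishes the argument.
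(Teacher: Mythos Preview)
Your proof is correct and follows essentially the same approach as the paper: the paper observes that an element of $\im(\alpha_1)$ satisfies $\max F_0 > \max F_1$ while an element of $\im(\alpha_2)$ satisfies $\max F_0 = \max F_1$, which is exactly your dichotomy ``$\mu-1\notin H_1$'' versus ``$\mu-1\in H_1$'' since $\max H_0=\mu-1$ and $H_1\subseteq H_0$.
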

\begin{proof}
Let $F=(F_0,F_1,F_2) \in \mathcal{F}(q \le 3)$. It follows from the definition of the maps $\alpha_i$ that if $F \in \im(\alpha_1)$, then $\max F_0 > \max F_1$, whereas if $F \in \im(\alpha_2)$, then $\max F_0 = \max F_1$. Therefore $F$ cannot belong to both.
\end{proof}

\begin{corollary}\label{cor n'} For all $g \ge 2$, we have $n_g' \,\ge \, n_{g-1}'+n_{g-2}'$.
\end{corollary}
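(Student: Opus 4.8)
The plan is to deduce the inequality directly from the two preceding propositions. We have just shown that $\alpha_1,\alpha_2\colon \fc(q\le 3)\to\fc(q\le 3)$ are well-defined injective maps (by Proposition~\ref{alpha} and the injectivity observations following it) and that $\im(\alpha_1)\cap\im(\alpha_2)=\emptyset$. First I would restrict attention to a fixed genus. By the genus count in Proposition~\ref{alpha}, if $F\in\fc(g-1,q\le 3)$ then $\alpha_1(F)\in\fc(g,q\le 3)$, and if $F\in\fc(g-2,q\le 3)$ then $\alpha_2(F)\in\fc(g,q\le 3)$. Thus $\alpha_1$ maps $\fc(g-1,q\le 3)$ injectively into $\fc(g,q\le 3)$, and $\alpha_2$ maps $\fc(g-2,q\le 3)$ injectively into $\fc(g,q\le 3)$, and these two image sets are disjoint subsets of $\fc(g,q\le 3)$.

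Counting cardinalities then gives
\begin{equation*}
n_g' \;=\; |\fc(g,q\le 3)| \;\ge\; |\alpha_1(\fc(g-1,q\le 3))| + |\alpha_2(\fc(g-2,q\le 3))| \;=\; n_{g-1}' + n_{g-2}',
\end{equation*}
using $n_k'=|\fc(k,q\le 3)|$ for all $k\ge 0$ as recorded earlier, together with the injectivity of $\alpha_1$ and $\alpha_2$ to equate the cardinalities of the domains with those of the images. This holds for every $g\ge 2$; the cases $g=2$ presents no difficulty since $\fc(0,q\le 3)$ and $\fc(1,q\le 3)$ are both nonempty (indeed singletons), and the argument above does not require $g\ge 3$.

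There is essentially no obstacle here: all the work has already been done in Proposition~\ref{alpha} (the gapset-filtration property and the genus bookkeeping for $\alpha_1,\alpha_2$) and in the disjointness proposition. The only point to be careful about is that one must check $\alpha_i$ preserves the depth bound $q\le 3$, but this is explicitly part of the conclusion of Proposition~\ref{alpha} (both $\alpha_1(F)$ and $\alpha_2(F)$ have depth $q\le 3$), so nothing further is needed. I would therefore present the corollary's proof as a short two- or three-line consequence of the two propositions, with no additional computation.
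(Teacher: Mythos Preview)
Your proof is correct and follows essentially the same approach as the paper: the paper's own argument is just the two-line observation that $\fc(g,q\le 3)$ contains disjoint copies of $\fc(g-1,q\le 3)$ and $\fc(g-2,q\le 3)$ via $\alpha_1,\alpha_2$, exactly as you wrote. The only minor inaccuracy is that the depth bound $q\le 3$ for $\alpha_i(F)$ appears in the \emph{proof} of Proposition~\ref{alpha} (and in the displayed sentence immediately following it) rather than in its statement, but this does not affect your argument.
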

\begin{proof} Indeed, the above results imply that, for $g \ge 2$, the set $\fc(g, q \le 3)$ contains disjoint copies of $\fc(g-1, q \le 3)$ and $\fc(g-2, q \le 3)$. Whence the inequality
$|\fc(g, q \le 3)| \,\ge \, |\fc(g-1, q \le 3)|+|\fc(g-2, q \le 3)|$.
\end{proof}

\smallskip
Corollary~\ref{cor n'} and its proof are illustrated in Figure~\ref{567}. Let $\mathcal{T}'=\fc(q \le 3)$ considered as a subtree of $\mathcal{T}$. Then $\mathcal{T}'$ has $n'_g$ vertices at level $g$ for all $g \ge 0$. Figure~\ref{567} shows the levels $g=5,6,7$ of $\mathcal{T}'$. There are $n'_5=11$ vertices pictured 
\hspace{0.1pt}
\begin{tikzpicture}[x=1pt,y=1pt,scale=0.85,baseline=-3pt]
\fill [color=black] (0,0) circle(1.5);
\end{tikzpicture}
\hspace{0.1pt}
at level $5$, and $n'_6=20$ vertices pictured
\hspace{0.1pt}
\begin{tikzpicture}[x=1pt,y=1pt,scale=0.85,baseline=-3pt]
\fill [color=black!40!white] (0,0) circle(3);
\end{tikzpicture}
\hspace{0.1pt}
at level $6$. Level $7$ of $\mathcal{T}'$ is seen to contain disjoint copies of levels $5$ and $6$, plus two more vertices pictured
\hspace{0.1pt}
 \begin{tikzpicture}[x=1pt,y=1pt,scale=0.85,baseline=-3pt]
\fill [color=black!20!white] (0,0) circle(5);
\end{tikzpicture}, 
thereby accounting for all $n'_7=33$ vertices at that level.

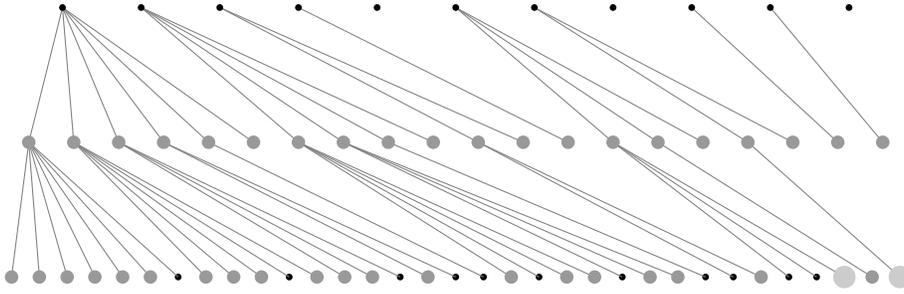
\begin{figure}
\begin{center}
\begin{tikzpicture}[x=1pt,y=1pt, scale=0.85]
\fill [color=white] (86.47058823529412,0) circle(5);
\draw [color=gray] (35.0,120) -- (20.0,60);
\draw [color=gray] (35.0,120) -- (40.0,60);
\draw [color=gray] (35.0,120) -- (60.0,60);
\draw [color=gray] (35.0,120) -- (80.0,60);
\draw [color=gray] (35.0,120) -- (100.0,60);
\draw [color=gray] (35.0,120) -- (120.0,60);
\fill [color=black] (35.0,120) circle(1.5);
\fill [color=black] (86.47058823529412,0) circle(1.5);
\fill [color=white] (135.88235294117646,0) circle(5);
\draw [color=gray] (70.0,120) -- (140.0,60);
\draw [color=gray] (70.0,120) -- (160.0,60);
\draw [color=gray] (70.0,120) -- (180.0,60);
\draw [color=gray] (70.0,120) -- (200.0,60);
\fill [color=black] (70.0,120) circle(1.5);
\fill [color=black] (135.88235294117646,0) circle(1.5);
\fill [color=white] (185.29411764705884,0) circle(5);
\draw [color=gray] (105.0,120) -- (220.0,60);
\draw [color=gray] (105.0,120) -- (240.0,60);
\fill [color=black] (105.0,120) circle(1.5);
\fill [color=black] (185.29411764705884,0) circle(1.5);
\fill [color=white] (210.0,0) circle(5);
\draw [color=gray] (140.0,120) -- (260.0,60);
\fill [color=black] (140.0,120) circle(1.5);
\fill [color=black] (210.0,0) circle(1.5);
\fill [color=white] (222.3529411764706,0) circle(5);
\fill [color=black] (175.0,120) circle(1.5);
\fill [color=black] (222.3529411764706,0) circle(1.5);
\fill [color=white] (247.05882352941177,0) circle(5);
\draw [color=gray] (210.0,120) -- (280.0,60);
\draw [color=gray] (210.0,120) -- (300.0,60);
\draw [color=gray] (210.0,120) -- (320.0,60);
\fill [color=black] (210.0,120) circle(1.5);
\fill [color=black] (247.05882352941177,0) circle(1.5);
\fill [color=white] (284.11764705882354,0) circle(5);
\draw [color=gray] (245.0,120) -- (340.0,60);
\draw [color=gray] (245.0,120) -- (360.0,60);
\fill [color=black] (245.0,120) circle(1.5);
\fill [color=black] (284.11764705882354,0) circle(1.5);
\fill [color=white] (321.1764705882353,0) circle(5);
\fill [color=black] (280.0,120) circle(1.5);
\fill [color=black] (321.1764705882353,0) circle(1.5);
\fill [color=white] (333.5294117647059,0) circle(5);
\draw [color=gray] (315.0,120) -- (380.0,60);
\fill [color=black] (315.0,120) circle(1.5);
\fill [color=black] (333.5294117647059,0) circle(1.5);
\fill [color=white] (358.2352941176471,0) circle(5);
\draw [color=gray] (350.0,120) -- (400.0,60);
\fill [color=black] (350.0,120) circle(1.5);
\fill [color=black] (358.2352941176471,0) circle(1.5);
\fill [color=white] (370.5882352941177,0) circle(5);
\fill [color=black] (385.0,120) circle(1.5);
\fill [color=black] (370.5882352941177,0) circle(1.5);
\draw [color=gray] (20.0,60) -- (12.352941176470589,0);
\draw [color=gray] (20.0,60) -- (24.705882352941178,0);
\draw [color=gray] (20.0,60) -- (37.05882352941177,0);
\draw [color=gray] (20.0,60) -- (49.411764705882355,0);
\draw [color=gray] (20.0,60) -- (61.76470588235294,0);
\draw [color=gray] (20.0,60) -- (74.11764705882354,0);
\draw [color=gray] (20.0,60) -- (86.47058823529412,0);
\fill [color=black!40!white] (20.0,60) circle(3);
\fill [color=black!40!white] (12.352941176470589,0) circle(3);
\draw [color=gray] (40.0,60) -- (98.82352941176471,0);
\draw [color=gray] (40.0,60) -- (111.1764705882353,0);
\draw [color=gray] (40.0,60) -- (123.52941176470588,0);
\draw [color=gray] (40.0,60) -- (135.88235294117646,0);
\draw [color=gray] (40.0,60) -- (148.23529411764707,0);
\fill [color=black!40!white] (40.0,60) circle(3);
\fill [color=black!40!white] (24.705882352941178,0) circle(3);
\draw [color=gray] (60.0,60) -- (160.58823529411765,0);
\draw [color=gray] (60.0,60) -- (172.94117647058823,0);
\draw [color=gray] (60.0,60) -- (185.29411764705884,0);
\fill [color=black!40!white] (60.0,60) circle(3);
\fill [color=black!40!white] (37.05882352941177,0) circle(3);
\draw [color=gray] (80.0,60) -- (197.64705882352942,0);
\draw [color=gray] (80.0,60) -- (210.0,0);
\fill [color=black!40!white] (80.0,60) circle(3);
\fill [color=black!40!white] (49.411764705882355,0) circle(3);
\draw [color=gray] (100.0,60) -- (222.3529411764706,0);
\fill [color=black!40!white] (100.0,60) circle(3);
\fill [color=black!40!white] (61.76470588235294,0) circle(3);
\fill [color=black!40!white] (120.0,60) circle(3);
\fill [color=black!40!white] (74.11764705882354,0) circle(3);
\draw [color=gray] (140.0,60) -- (234.7058823529412,0);
\draw [color=gray] (140.0,60) -- (247.05882352941177,0);
\draw [color=gray] (140.0,60) -- (259.4117647058824,0);
\draw [color=gray] (140.0,60) -- (271.7647058823529,0);
\fill [color=black!40!white] (140.0,60) circle(3);
\fill [color=black!40!white] (98.82352941176471,0) circle(3);
\draw [color=gray] (160.0,60) -- (284.11764705882354,0);
\draw [color=gray] (160.0,60) -- (296.47058823529414,0);
\draw [color=gray] (160.0,60) -- (308.8235294117647,0);
\fill [color=black!40!white] (160.0,60) circle(3);
\fill [color=black!40!white] (111.1764705882353,0) circle(3);
\draw [color=gray] (180.0,60) -- (321.1764705882353,0);
\fill [color=black!40!white] (180.0,60) circle(3);
\fill [color=black!40!white] (123.52941176470588,0) circle(3);
\fill [color=black!40!white] (200.0,60) circle(3);
\fill [color=black!40!white] (148.23529411764707,0) circle(3);
\draw [color=gray] (220.0,60) -- (333.5294117647059,0);
\draw [color=gray] (220.0,60) -- (345.88235294117646,0);
\fill [color=black!40!white] (220.0,60) circle(3);
\fill [color=black!40!white] (160.58823529411765,0) circle(3);
\fill [color=black!40!white] (240.0,60) circle(3);
\fill [color=black!40!white] (172.94117647058823,0) circle(3);
\fill [color=black!40!white] (260.0,60) circle(3);
\fill [color=black!40!white] (197.64705882352942,0) circle(3);
\draw [color=gray] (280.0,60) -- (358.2352941176471,0);
\draw [color=gray] (280.0,60) -- (370.5882352941177,0);
\draw [color=gray] (280.0,60) -- (382.94117647058823,0);
\fill [color=black!40!white] (280.0,60) circle(3);
\fill [color=black!40!white] (234.7058823529412,0) circle(3);
\draw [color=gray] (300.0,60) -- (395.29411764705884,0);
\fill [color=black!40!white] (300.0,60) circle(3);
\fill [color=black!40!white] (259.4117647058824,0) circle(3);
\fill [color=black!40!white] (320.0,60) circle(3);
\fill [color=black!40!white] (271.7647058823529,0) circle(3);
\draw [color=gray] (340.0,60) -- (407.64705882352945,0);
\fill [color=black!40!white] (340.0,60) circle(3);
\fill [color=black!40!white] (296.47058823529414,0) circle(3);
\fill [color=black!40!white] (360.0,60) circle(3);
\fill [color=black!40!white] (308.8235294117647,0) circle(3);
\fill [color=black!40!white] (380.0,60) circle(3);
\fill [color=black!40!white] (345.88235294117646,0) circle(3);
\fill [color=black!40!white] (400.0,60) circle(3);
\fill [color=black!40!white] (395.29411764705884,0) circle(3);
\fill [color=black!20!white] (382.94117647058823,0) circle(5);
\fill [color=black!20!white] (407.64705882352945,0) circle(5);
\end{tikzpicture}
\end{center}\caption{Disjoint embeddings of levels $5,6$ into level $7$ of $\mathcal{T}'$. }
\label{567}
\end{figure}
\begin{remark}
Replacing $q \le 3$ by $q = 4$ in Corollary~\ref{cor n'}, the corresponding inequality seems to hold for all $g \ge 2$, except for $g = 5$ since the values of $|\fc(g, q =4)|$ for $g=3,4,5$ are $0,1,0$. By contrast, for $q=5$, the corresponding inequality almost completely fails, at least apparently, as we conjecture that
$$
|\fc(g, q = 5)| \,< \, |\fc(g-1, q = 5)|+|\fc(g-2, q = 5)|
$$
holds for all $g \ge 21$.
\end{remark}
Nevertheless, replacing $q=d$ by $q \le d$ as in Corollary~\ref{cor n'} for $d=3$, we have the following conjecture.
\begin{conjecture} For all $d,g \ge 2$, we have
$$
|\fc(g, q \le d)| \,\ge \, |\fc(g-1, q \le d)|+|\fc(g-2, q \le d)|.
$$ 
\end{conjecture}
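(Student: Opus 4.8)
The plan is to push the method of Section~\ref{section lower} from $d=3$ to arbitrary $d$. Recall that for $d\le 3$ the inequality came from exhibiting two injective self-maps of $\fc(q\le d)$ that raise the genus by $1$ and by $2$ with disjoint images, namely $\alpha_1,\alpha_2$, which raise the multiplicity $m$ to $m+1$ by inserting the integer $m$ into the first piece (and, for $\alpha_2$, also the second piece) of the gapset filtration. So the first thing to try is the same formulas for filtrations of length up to $d$,
$$
\alpha_1^{(d)}(F)=(F_0\sqcup\{m\},F_1,\dots,F_{d-1}),\qquad \alpha_2^{(d)}(F)=(F_0\sqcup\{m\},F_1\sqcup\{m\},F_2,\dots,F_{d-1}),
$$
and to verify, as in Proposition~\ref{alpha}, that the associated $(m+1)$-extension is again a gapset. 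The images are automatically disjoint by the same $\max F_0$ versus $\max F_1$ dichotomy, so everything would reduce to that gapset property, after which $|\fc(g,q\le d)|\ge|\fc(g-1,q\le d)|+|\fc(g-2,q\le d)|$ follows exactly as in Corollary~\ref{cor n'}.

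The difficulty is that for $d\ge 4$ these maps do \emph{not} always output gapsets: already $(1)^4\mapsto (12)(1)^3$ fails, since the latter's associated set $\{1,2\}\sqcup\{4\}\sqcup\{7\}\sqcup\{10\}$ contains $10=5+5$ but not $5$. The reason is structural: after raising the multiplicity by $1$, level $i$ of the image is $G_i$ shifted by $i$, and testing the gapset condition for $z=x+y$ with $z$ in a deep level requires splitting the shift $i$ between $x$ and $y$ according to the levels they land in; the nesting $F_{d-1}\subseteq\dots\subseteq F_0$ supplies enough room to absorb this only when $d\le 3$. Two remedies present themselves. First, one can raise the multiplicity by a larger, $d$-dependent amount $k$, so that level $i$ of the image has room $ik$, and then delete a carefully chosen subset from the deeper levels so that the total genus still increases by exactly $1$ (resp.\ $2$); the burden becomes showing that such a ``room-creating plus trimming'' operation can be made injective with images of the required shapes, and the main step there is a combinatorial lemma guaranteeing that the trimming never conflicts with the nesting. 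Second, one can induct on $d$ via $\fc(g,q\le d)=\fc(g,q\le d-1)\sqcup\fc(g,q=d)$ and the inequality already known for $q\le d-1$; since the depth-exactly-$d$ stratum may itself violate the Fibonacci inequality (cf.\ the preceding remark), this route requires a third family of injections controlling the interface between depths $d-1$ and $d$, so that the slack present in the $q\le d-1$ inequality always absorbs the deficit of the top stratum.

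The step I expect to be the main obstacle is, in either route, the verification of the gapset property of the corrected maps: unlike the $d=3$ case, the number of sum-decomposition types to check grows with $d$, so a direct case analysis will not close, and one needs a structural reformulation --- ideally a clean equivalence of the form ``the image is a gapset if and only if the source is'' for the chosen operation --- before the injectivity and disjointness bookkeeping, which is routine, can be completed. It is worth noting that, depth being bounded, $|\fc(g,q\le d)|$ satisfies a linear recurrence for each fixed $d$, and one expects a $d$-term Fibonacci-type relation as an upper envelope, generalizing \eqref{bounds}; the conjecture asserts the plain two-term Fibonacci relation as a lower envelope, and any generating-function approach would still need to pin down the signs of the relevant differences for the small values of $g$, which that viewpoint does not by itself provide.
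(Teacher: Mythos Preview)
The statement you are attempting to prove is labelled a \emph{Conjecture} in the paper, not a theorem: the paper proves it only for $d=2$ (Corollary~\ref{fibo}) and $d=3$ (Corollary~\ref{cor n'}), and explicitly leaves $d\ge 4$ open. So there is no ``paper's own proof'' to compare against.

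Your proposal is not a proof either; it is a research outline. You correctly identify that the maps $\alpha_1,\alpha_2$ of Section~\ref{section lower} fail to preserve the gapset property once $d\ge 4$, and you cite the same counterexample $(1)^4\mapsto(12)(1)^3$ that the paper itself gives right after Proposition~\ref{alpha}. That diagnosis is sound. But from that point on you only sketch two possible repair strategies (raise the multiplicity by some $k>1$ and trim; or induct on $d$ and control the interface between depths $d-1$ and $d$) without carrying either out, and you yourself flag the gapset verification as ``the main obstacle'' that would require ``a structural reformulation'' you do not supply. That is an honest assessment of where the difficulty lies, but it is not a proof.

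Two further comments. First, your side remark that $|\fc(g,q\le d)|$ ``satisfies a linear recurrence for each fixed $d$'' is not established anywhere in the paper and is not obvious; for $d=3$ the paper only sandwiches $n'_g$ between Fibonacci-type and tribonacci-type bounds, which does not imply a linear recurrence for $n'_g$ itself. Second, the inductive route you describe would need to absorb the deficit of the depth-exactly-$d$ stratum into the slack of the $q\le d-1$ inequality, but for $d-1=2$ that inequality is an \emph{equality} (Corollary~\ref{fibo}), so there is no slack at all at the base of the induction; this route therefore cannot work as stated.
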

This conjecture may be seen as a refinement of the conjecture $n_g \ge n_{g-1}+ n_{g-2}$ for all $g \ge 2$. Corollaries~\ref{fibo} and ~\ref{cor n'} show that it holds for $d=2$ and $d=3$, respectively.

\section{An upper bound on $n_g'$}\label{section upper}

Having just proved the lower bound $n'_g \ge n_{g-1}'+n_{g-2}'$ for all $g \ge 2$, we now establish the upper bound
$$
n'_g \le n_{g-1}'+n_{g-2}'+n_{g-3}'
$$
for all $g \ge 3$.

\subsection{The images of $\alpha_1, \alpha_2$}

We first determine the respective images in $\fc(g, q \le 3)$ of the maps $\alpha_1, \alpha_2$.

\begin{proposition}\label{images} Let $F=(F_0,F_1,F_2) \in \fc(g, q \le 3)$ with $g \ge 2$. Then 
\begin{eqnarray*}
F \in \im(\alpha_1) & \iff & \max F_0 > \max F_1, \\
F \in \im(\alpha_2) & \iff & \max F_0 = \max F_1 > \max F_2.
\end{eqnarray*}
\end{proposition}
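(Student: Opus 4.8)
The plan is to treat the two equivalences in turn, observing first that the implications from right to left in the statement are the substantive ones, since the forward directions are read off directly from the definitions of $\alpha_1,\alpha_2$. Indeed, if $F=\alpha_1(F')$ with $F'$ of multiplicity $\mu$, then $F=([1,\mu],F'_1,F'_2)$ with $F'_1\subseteq F'_0=[1,\mu-1]$, so $\max F_0=\mu>\max F_1$; and if $F=\alpha_2(F')$, then $F=([1,\mu],F'_1\sqcup\{\mu\},F'_2)$, so $\max F_0=\max F_1=\mu$, while $F'_2\subseteq F'_1\subseteq[1,\mu-1]$ forces $\max F_2<\mu$.

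For the two reverse implications, write $m=\max F_0+1$ for the multiplicity of $F$ and $G=\tau(F)=G_0\sqcup G_1\sqcup G_2$, so that $G_1=m+F_1\subseteq[m+1,2m-1]$ and $G_2=2m+F_2\subseteq[2m+1,3m-1]$. I would extract two ``reversal'' statements. Fact~1: if $\max F_0>\max F_1$, then $F^{(1)}:=(F_0\setminus\{m-1\},F_1,F_2)$ is a gapset filtration, necessarily of depth $\le 3$, multiplicity $m-1$ and genus $g-1$, and $\alpha_1(F^{(1)})=F$. Fact~2: if $\max F_0=\max F_1>\max F_2$, then $\widehat F:=(F_0,F_1\setminus\{m-1\},F_2)$ is a gapset filtration of multiplicity $m$ and genus $g-1$. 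Given Fact~1, the first reverse implication is immediate. Given both, the second follows by composition: $\widehat F$ satisfies $\max\widehat F_0=m-1>\max\widehat F_1$, so Fact~1 applied to $\widehat F$ shows $F^{(2)}:=(F_0\setminus\{m-1\},F_1\setminus\{m-1\},F_2)$ is a gapset filtration (of genus $g-2$, which is where $g\ge 2$ is needed), and $\alpha_2(F^{(2)})=F$ since $m-1\in F_1$.

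To prove Fact~1 I would verify directly that $G':=\tau(F^{(1)})=[1,m-2]\sqcup(G_1-1)\sqcup(G_2-2)$ is a gapset. The hypothesis $\max F_1\le m-2$ is exactly what makes $F^{(1)}$ a genuine $(m-1)$-filtration and yields $G_1-1\subseteq[m,2m-3]$ and $G_2-2\subseteq[2m-1,3m-4]$. Given $z=x+y\in G'$ with $1\le x\le y$: if $z$ lies in the first two pieces then $z\le 2m-3$, so $x\le m-2$ and $x\in G'$; if $z$ lies in the third piece then $z+2\in G_2$, and since $G$ is a gapset, $x+1\in G$ or $y+1\in G$, whence, translating membership back through $G\cap[m,2m-1]=G_1$ and $G\cap[2m,3m-1]=G_2$, one concludes that $x$ or $y$ lands in the appropriate piece of $G'$ (or that $x\le m-2$). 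Fact~2 is proved the same way, on $\tau(\widehat F)=[1,m-1]\sqcup(G_1\setminus\{2m-1\})\sqcup G_2$; note that $2m-1\in G_1$ here, since $\max F_1=m-1$, so deleting it is meaningful. The case $z\in G_2$, $z=x+y$, runs as in the proof of Proposition~\ref{alpha}, the one extra point being the boundary decomposition $z=m+(2m-1)$, i.e. $z=3m-1$, which is ruled out precisely because $\max F_2\le m-2$ forces $3m-1\notin G_2$.

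The step I expect to be the main obstacle is the case analysis in Facts~1 and~2 for $z$ in the top piece: one has to enumerate the few boundary decompositions $z=x+y$ that the gapset property of $G$ alone does not resolve, and check that each is excluded by the relevant strict inequality on the maxima ($\max F_0>\max F_1$ for Fact~1, $\max F_1>\max F_2$ for Fact~2). Everything else---identifying the canonical pieces of $G'$ and $\widehat F$, checking multiplicities and genera, and assembling the four implications into the two equivalences---is routine bookkeeping with the inclusions $G_1\subseteq[m+1,2m-1]$ and $G_2\subseteq[2m+1,3m-1]$ and their shifts.
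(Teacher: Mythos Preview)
Your proposal is correct and follows essentially the same strategy as the paper: the forward directions are read off from the definitions, and for the converse one constructs the candidate preimage $F'$ and verifies that $\tau(F')$ is a gapset by reducing the only nontrivial case---an element $z$ in the top piece $G'_2$---to the gapset property of $G$ via the shift $z\mapsto z+2$.

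The one organizational difference is that you factor the $\alpha_2$ case into two steps (Fact~2 followed by Fact~1), passing through the intermediate filtration $\widehat F=(F_0,F_1\setminus\{m-1\},F_2)$ of the \emph{same} multiplicity $m$, whereas the paper handles both cases in a single construction $F'=(F_0\setminus\{m-1\},F_1\setminus\{m-1\},F_2)$ of multiplicity $m-1$. Your Fact~2 is thus a slightly different lemma (removing $2m-1$ from $G$ preserves the gapset property, using $\max F_2\le m-2$ to exclude the decomposition $3m-1=m+(2m-1)$), and its proof uses the gapset property of $G$ directly rather than through a shift; the reference to Proposition~\ref{alpha} is therefore not quite the right analogy, though your key observation about the boundary decomposition is exactly what is needed. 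Either organization works and the verifications are of the same nature.
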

\begin{proof} Let $F=(F_0,F_1,F_2)$ be a gapset filtration of genus $g$. By construction of the map $\alpha_i$, the stated condition for $F$ to belong to $\im(\alpha_i)$ is necessary. We now prove that it is sufficient. Let $m \ge 2$ be the multiplicity of $F$, so that
$$
[1,m-1] = F_0 \supseteq F_1 \supseteq F_2.
$$
Thus $\max F_0 = m-1$, and we have two cases to consider: 

\smallskip
\textbf{Case 1.} $\max F_1 < m-1$.

\smallskip
\textbf{Case 2.} $\max F_2 < \max F_1 = m-1$.

\smallskip
\noindent
Thus $\max F_2 \le m-2$ in both cases. We claim that $F \in \im(\alpha_1)$ in Case 1 and $F \in \im(\alpha_2)$ in Case 2.

Let $G = \tau(F) = G_0 \cup G_1 \cup G_2$, where $G_i = im +F_i$ for $i=0,1,2$. Then $G$ is a gapset by hypothesis.

\smallskip
Consider the $(m-1)$-filtration $F' \, = \, (F'_0, F'_1,F'_2)$, where 
$$
F'_0 = F_0\setminus \{m-1\} = [1,m-2], \quad F'_1 = F_1\setminus \{m-1\}, \quad F'_2=F_2.
$$
Note that $F'_1 = F_1$ in Case 1. Let $G'$ be the corresponding $(m-1)$-extension, i.e. $G'=\tau(F')=G_0' \cup G_1' \cup G_2'$, where $G_i' = i(m-1) +F'_i$ for $i=0,1,2$. That is, we have 
$$G'_0=F'_0=[1,m-2], \quad G'_1=(m-1)+F'_1, \quad G'_2=2(m-1)+F_2.$$ 
Note that $|G'|=g-1$ in Case 1 and $|G'|=g-2$ in Case 2. We claim that $G'$ is a gapset in both cases.

\smallskip
To start with, it follows from Lemma~\ref{filtrations of depth 2} that $G_0' \cup G_1'$ is a gapset of depth at most 2. Let now $z \in G_2'$, and let $z = x+y$ with $1 \le x \le y$. We must show that $x$ or $y$ belongs to $G'$.

\smallskip
{\tiny $\bullet$} If $x \le m-2$ then $x \in F_0' = G_0' \subseteq G'$ and we are done in this case.

\smallskip
{\tiny $\bullet$} Assume now $x \ge m-1$. Since $z \in G_2'= 2(m-1)+F_2$, we have $z = 2(m-1)+t$ with $t \in F_2$. Since $\max F_2  \le m-2$ in both cases, we have $z \le 3m-4$. Since $x+y = z$ and $x \ge m-1$, it follows that $y \le 2m-3$. Thus so far, we have
$$
m-1 \le x \le y \le 2m-3.
$$
Now $(x+1)+(y+1)=z+2 = 2m+t$, and $2m+t \in G_2$ since $t \in F_2$. Therefore $x+1$ or $y+1$ belongs to $G$ since $G$ is a gapset. It follows that $x+1$ or $y+1$ belongs to $G_1$, as $m \le x+1 \le y+1 \le 2m-2$ and $\max G_0=m-1$, $\min G_2 \ge 2m+1$. Therefore $x$ or $y$ belongs to $G_1-1 = (m+F_1)-1=(m-1)+F_1$. 

\smallskip
{\tiny $\bullet$} \underline{In Case 1}, we have $F'_1=F_1$, thus $x$ or $y$ belongs to $(m-1)+F'_1=G'_1 \subseteq G'$ and we are done in this case. We conclude that $F'$ is a gapset filtration on genus $g-1$, and $F= \alpha_1(F')$ by construction, so that $F \in \im(\alpha_1)$.

\smallskip
{\tiny $\bullet$} \underline{In Case 2}, we have $F'_1=F_1 \setminus \{m-1\}$.
Now since $x \le y \le 2m-3=(m-1)+(m-2)$, it follows that $x$ or $y$ in fact belongs to $(m-1)+F_1\setminus \{m-1\}=(m-1)+F_1' = G_1'$. Whence $x$ or $y$ belongs to $G'$ as desired. We conclude here again that $F'$ is a gapset filtration, now of genus $g-2$, and $F=\alpha_2(F'')$ by construction, so that $F \in \im(\alpha_2)$.
\end{proof}

\subsection{Trimming the maximal elements}

\begin{proposition} Let $F=(F_0, F_1,F_2)$ be a gapset filtration of multiplicity $m+1 \ge 2$ and depth $3$, so that $F_0=[1,m] \supseteq F_1 \supseteq F_2 \not=\emptyset$. Let $a_i= \max F_i$ and $F'_i=F_i \setminus \{a_i\}$ for all $i$. Let
$$F'=(F'_0, F'_1, F'_2).$$
Then $F'$ is a gapset filtration.
\end{proposition}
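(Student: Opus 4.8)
The plan is to verify directly that the $m$-extension $G':=\tau(F')$ attached to $F'$ is a gapset; since $\varphi$ and $\tau$ are mutually inverse (Notation~\ref{phi and tau}), this is exactly what has to be shown. Throughout, write $G=\tau(F)=G_0\sqcup G_1\sqcup G_2$ for the gapset associated to $F$, with $G_i=i(m+1)+F_i$, so that $G_0=[1,m]$, $\min G_1\ge m+2$ and $\min G_2\ge 2m+3$, and recall $G$ has multiplicity $m+1$. The first (routine) step is bookkeeping. Since $a_0=m$, removing it from $F_0$ gives $F'_0=[1,m-1]$; since $\max F'_1\le a_1-1\le m-1$ we get $F'_1\subseteq[1,m-1]=F'_0$; and every element of $F'_2=F_2\setminus\{a_2\}$ lies in $F_2\subseteq F_1$ and is $<a_2\le a_1$ (here $a_2\le a_1$ because $F_2\subseteq F_1$), hence lies in $F_1\setminus\{a_1\}=F'_1$. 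So $F'$ is a genuine $m$-filtration, and $G'=G'_0\sqcup G'_1\sqcup G'_2$ with $G'_0=[1,m-1]$, $G'_1=m+F'_1\subseteq[m+1,2m-1]$, $G'_2=2m+F'_2\subseteq[2m+1,3m-1]$, a set of multiplicity $m$.

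Next I would check the gapset property of $G'$: for $z\in G'$ and any decomposition $z=x+y$ with $1\le x\le y$, I must produce $x\in G'$ or $y\in G'$. When $z\in G'_0$ one has $x\le z-1\le m-2$, and when $z\in G'_1$ one has $x\le z/2\le(2m-1)/2<m$; in both cases $x\in[1,m-1]=G'_0\subseteq G'$, so these are immediate. The only substantive case is $z\in G'_2$, say $z=2m+t$ with $t\in F'_2$, hence $1\le t\le a_2-1$. If $x\le m-1$ we are done as before; otherwise $m\le x\le y$ forces $y=z-x\le(3m-1)-m=2m-1$, so writing $u:=x-m$ and $v:=y-m$ we get $0\le u\le v$ and $u+v=t$, whence in particular $u,v\le t\le a_2-1\le a_1-1$.

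The key step, and the one I expect to be the only real obstacle, is to transport information between the third block of $G'$, which sits at level $2m$, and that of $G$, which sits at level $2(m+1)$; the bridge is the shift $z\mapsto z+2$. Indeed $z+2=2(m+1)+t\in 2(m+1)+F_2=G_2\subseteq G$, and $z+2=(x+1)+(y+1)$, so the gapset property of $G$ yields $x+1\in G$ or $y+1\in G$. Suppose $x+1\in G$. Since $m+1$ is the multiplicity of $G$ we have $m+1\notin G$, so $x+1\ne m+1$, i.e. $u\ge 1$, and $x+1\in[m+2,2m]$. Because $G_0\subseteq[1,m]$ and $\min G_2\ge 2m+3$, we have $G\cap[m+2,2m]\subseteq G_1=(m+1)+F_1$, so $u=(x+1)-(m+1)\in F_1$; and as $u<a_1$ this gives $u\in F_1\setminus\{a_1\}=F'_1$, hence $x=m+u\in m+F'_1=G'_1\subseteq G'$. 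Symmetrically, $y+1\in G$ gives $y\in G'_1\subseteq G'$. Thus $x\in G'$ or $y\in G'$ in every case, so $G'$ is a gapset and $F'=\varphi(G')$ is a gapset filtration.

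The whole argument is short once this last case is set up correctly; the two observations that make it go through are the inequality $t\le a_2-1\le a_1-1$ — which prevents the pulled-back index $u$ (or $v$) from landing on the deleted value $a_1$ — and the fact that $m+1\notin G$, which kills the degenerate possibility $u=0$ precisely when $x+1\in G$. Everything else is interval arithmetic together with the inclusions among the $F_i$. The only point I would double-check carefully while writing is the handling of the degenerate multiplicities (e.g. $m=1$, where $F'=(\emptyset,\emptyset,\emptyset)$ and $G'=\emptyset$), but there the gapset condition is vacuous.
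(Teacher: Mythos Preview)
Your proof is correct and follows essentially the same route as the paper's: verify that $F'$ is an $m$-filtration, reduce to the case $z\in G'_2$, and use the shift $z\mapsto z+2$ to pull the gapset property back from $G_2$ to $G'_2$. Your handling of the key step is in fact slightly more direct than the paper's—having observed in advance that $u,v\le t\le a_2-1\le a_1-1$, you conclude immediately that whichever of $u,v$ lands in $F_1$ already lies in $F'_1$, whereas the paper reaches the same conclusion by a short contradiction argument.
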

\begin{proof} We have $m = a_0 \ge a_1 \ge a_2 \ge 1$. Let $G = \tau(F)$ and $G'=\tau(F')$. 
Then $G =G_0 \sqcup G_1 \sqcup G_2$ and $G'=G'_0 \sqcup G'_1 \sqcup G'_2$, where
\begin{eqnarray*}
G_i & = & i(m+1)+F_i, \\
G'_i & = & im+F'_i
\end{eqnarray*}
for $0 \le i \le 2$ by construction. By hypothesis $G$ is a gapset, and we claim that $G'$ also is. If $F'_2 = \emptyset$, the claim is true since then $F'=(F'_0,F'_1)$ is of length at most 2. 

Assume now $F'_2 \not= \emptyset$. Let $z \in G'_2$. Then $z= 2m+b$ for some $b \in F'_2$, with $b < a_2 \le m$ by construction. Assume $z = x + y$ for some integers $1 \le x \le y$. It suffices to show $\{x,y\} \cap G' \not=\emptyset$ in order to conclude the proof.

{\tiny $\bullet$} If $x \le m-1$, we are done since then $x \in G'_0=F'_0=[1,m-1]$.

{\tiny $\bullet$} Assume now $x \ge m$. Since $x+y=z= 2m+b \le 3m-1$, it follows that $y \le 2m-1$. We have $z+2=2(m+1)+b \in G_2$. Since $G$ is a gapset and $z+2=(x+1)+(y+1)$, it follows that $\{x+1, y+1\} \cap G \not=\emptyset$. More precisely, since $\max G_0 = m$, $\min G_2 \ge 2m+3$ and $m+1 \le x+1 \le y+1 \le 2m$, we have $\{x+1, y+1\} \cap G_1 \not=\emptyset$. Subtracting $m+1$ yields
\begin{equation}\label{cap}
\{x-m, y-m\} \cap F_1 \not=\emptyset.
\end{equation}
If $\{x-m, y-m\} \cap F'_1 \not=\emptyset$, we are done since then $\{x, y\} \cap G'_1 \not=\emptyset$. Assume now $\{x-m, y-m\} \cap F'_1 =\emptyset$. Then \eqref{cap} implies $\{x-m, y-m\} \cap F_1 = \{a_1\}$. Therefore either $x-m=a_1$ or $y-m=a_1$, whence $y-m \ge a_1$ since $y \ge x$. But then
$$
2m+b=z=x+y \ge x+a_1+m.
$$
Hence $m+b \ge x+a_1$. This implies $x \le m+(b-a_1) < m$ since $b < a_2 \le a_1$ as noted earlier, a contradiction with the current case $x \ge m$. Hence the hypothesis $\{x-m, y-m\} \cap F'_1 =\emptyset$ is absurd. Therefore $\{x,y\} \cap G'_1 \not=\emptyset$ and the proof is complete.
\end{proof}

The above result no longer holds in general for depth $q \ge 4$. The smallest gapset filtration for which suppressing the max of the pieces fails to preserve the gapset property is $(123)(13)^3$. This corresponds to the gapset $\{1,2,3\}\cup\{5,7\}\cup\{9,11\}\cup\{13,15\}$. Suppressing the max's yields the filtration $(12)(1)^3$. It corresponds to the set $\{1,2\}\cup\{4\}\cup\{7\}\cup\{10\}$ which is not a gapset since it contains $10=5+5$ but not $5$.

\begin{corollary}\label{cor max} Let $(F_0,F_1,F_2)$ be a gapset filtration such that $\max F_0 = \max F_1 = \max F_2=m$. Let $F'_i = F_i \setminus \{m\}$. Then $(F'_0, F'_1, F'_2)$  is a gapset filtration.
\end{corollary}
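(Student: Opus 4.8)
The plan is to obtain the Corollary as an immediate specialization of the preceding Proposition. Given a gapset filtration $(F_0,F_1,F_2)$ with $\max F_0 = \max F_1 = \max F_2 = m$, the first step is to verify that the hypotheses of that Proposition are satisfied. Since the first piece of a gapset filtration of multiplicity $\mu$ is always $[1,\mu-1]$, the condition $\max F_0 = m$ forces $F_0 = [1,m]$, so the multiplicity equals $m+1$, which is $\ge 2$ as $m = \max F_0 \ge 1$. Next, $\max F_2 = m$ gives $m \in F_2$, so $F_2 \neq \emptyset$; since a gapset filtration $(F_0,F_1,F_2)$ has at most three pieces, this shows the depth is exactly $3$. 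The chain $F_0 \supseteq F_1 \supseteq F_2$ holds by definition of a filtration.

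With the hypotheses in hand, the second step is to apply the Proposition verbatim. In its notation one puts $a_i = \max F_i$ and $F_i' = F_i \setminus \{a_i\}$; under our assumption $a_0 = a_1 = a_2 = m$, hence $F_i \setminus \{a_i\} = F_i \setminus \{m\} = F_i'$ for $i = 0,1,2$. The Proposition then asserts exactly that $(F_0',F_1',F_2')$ is a gapset filtration, which is the claim.

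As the statement is a pure specialization, there is no real obstacle, only routine bookkeeping. The one point worth a moment's attention is the degenerate case $m = 1$, where necessarily $F_0 = F_1 = F_2 = \{1\}$ and $(F_0',F_1',F_2') = (\emptyset,\emptyset,\emptyset)$, the gapset filtration of $\N$; but this case is already covered by the Proposition, whose multiplicity hypothesis $m+1 \ge 2$ is still met. If one preferred not to invoke the Proposition, one could simply repeat its proof with $a_0 = a_1 = a_2 = m$, but that would be redundant.
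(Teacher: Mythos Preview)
Your proof is correct and follows exactly the paper's approach: the paper's own proof simply says ``Follows as a special case of the above proposition,'' and you have carefully verified the hypotheses of that Proposition (multiplicity $m+1\ge 2$, depth $3$, $F_2\neq\emptyset$) and observed that $a_i=m$ for all $i$ gives $F_i\setminus\{a_i\}=F_i\setminus\{m\}$. There is nothing to add.
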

\begin{proof} Follows as a special case of the above proposition.
\end{proof}

\subsection{Main result}
We are now in a position to state and prove our main estimates of $n_g'$.

\begin{theorem}\label{theorem main} Let $n'_g$ denote the number of gapsets of genus $g$ and depth $q \le 3$. Then
$$
n'_{g-1}+n'_{g-2} \,\le\, n'_{g} \,\le\, n'_{g-1}+n'_{g-2}+n'_{g-3}
$$
for all $g \ge 3$.
\end{theorem}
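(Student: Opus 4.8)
The left-hand inequality $n'_{g-1}+n'_{g-2}\le n'_g$ is precisely Corollary~\ref{cor n'}, so the task is to prove $n'_g\le n'_{g-1}+n'_{g-2}+n'_{g-3}$ for all $g\ge 3$. The plan is to split $\fc(g,q\le 3)$ into three parts — the image of $\alpha_1$, the image of $\alpha_2$, and a small ``exceptional'' remainder — and then to bound the remainder by $n'_{g-3}$. Since $\alpha_1$ and $\alpha_2$ are injective self-maps of $\fc(q\le 3)$ raising the genus by $1$ and $2$ respectively (Proposition~\ref{alpha}) and have disjoint images ($\im(\alpha_1)\cap\im(\alpha_2)=\emptyset$), the sets $\alpha_1\bigl(\fc(g-1,q\le 3)\bigr)$ and $\alpha_2\bigl(\fc(g-2,q\le 3)\bigr)$ are disjoint subsets of $\fc(g,q\le 3)$ of sizes $n'_{g-1}$ and $n'_{g-2}$ respectively, and they are exactly $\im(\alpha_1)\cap\fc(g,q\le 3)$ and $\im(\alpha_2)\cap\fc(g,q\le 3)$. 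Writing $R_g=\fc(g,q\le 3)\setminus\bigl(\im(\alpha_1)\cup\im(\alpha_2)\bigr)$, we obtain the identity $n'_g=n'_{g-1}+n'_{g-2}+|R_g|$, so it suffices to show $|R_g|\le n'_{g-3}$.

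The next step is to describe $R_g$ explicitly. Since $g\ge 3\ge 2$, Proposition~\ref{images} applies: a filtration $F=(F_0,F_1,F_2)\in\fc(g,q\le 3)$ lies outside $\im(\alpha_1)$ exactly when $\max F_0=\max F_1$ (using $F_0\supseteq F_1\supseteq F_2$), and, given that, lies outside $\im(\alpha_2)$ exactly when moreover $\max F_1=\max F_2$. Hence $R_g$ is the set of gapset filtrations of genus $g$ whose three pieces share a common maximal element. In particular $F_1$ and $F_2$ are then nonempty, so such an $F$ has depth exactly $3$; and since $F_0=[1,m-1]$ where $m\ge 2$ is the multiplicity of $F$, that common maximal element is $m-1$.

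The heart of the argument is then an injection $\rho\colon R_g\hookrightarrow\fc(g-3,q\le 3)$. For $F=(F_0,F_1,F_2)\in R_g$ of multiplicity $m$, Corollary~\ref{cor max} (applied with common maximum $m-1$) guarantees that
$$
\rho(F)\ :=\ \bigl(F_0\setminus\{m-1\},\ F_1\setminus\{m-1\},\ F_2\setminus\{m-1\}\bigr)
$$
is again a gapset filtration; it has genus $g-3$ and depth $\le 3$, hence lies in $\fc(g-3,q\le 3)$. To see that $\rho$ is injective, observe that the first piece of $\rho(F)$ is $[1,m-2]$, so the multiplicity $m-1$ of $\rho(F)$ determines $m$, and then each $F_i$ equals the $i$-th piece of $\rho(F)$ enlarged by $\{m-1\}$; thus $F$ is recovered from $\rho(F)$. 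Consequently $|R_g|\le|\fc(g-3,q\le 3)|=n'_{g-3}$, and substituting this into $n'_g=n'_{g-1}+n'_{g-2}+|R_g|$ completes the proof. (The hypothesis $g\ge 3$ is exactly what makes $g-3\ge 0$, so that $n'_{g-3}$ is defined.)

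Most of the substantive work has already been carried out: Proposition~\ref{images}, which pins down the images of $\alpha_1$ and $\alpha_2$ by comparisons of maxima, and the ``trimming'' proposition behind Corollary~\ref{cor max}, which says that deleting a common top element from the three pieces preserves the gapset property, are the nontrivial inputs. Given those, the only point one must take care with here is that $\rho$ is invertible on its image — equivalently, that the multiplicity of $\rho(F)$ remembers enough about $F$ — so that the three contributions $n'_{g-1}$, $n'_{g-2}$ and $|R_g|\le n'_{g-3}$ assemble into the stated bound.
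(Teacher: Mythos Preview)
Your proof is correct and follows essentially the same approach as the paper's: partition $\fc(g,q\le 3)$ according to the pattern of equalities among $\max F_0,\max F_1,\max F_2$, identify the first two pieces with $\im(\alpha_1)$ and $\im(\alpha_2)$ via Proposition~\ref{images}, and inject the third piece into $\fc(g-3,q\le 3)$ via Corollary~\ref{cor max}. The only cosmetic difference is that the paper verifies $g=3,4,5$ separately before running the general argument for $g\ge 6$, whereas you apply the argument uniformly for $g\ge 3$; your version is fine, since Proposition~\ref{images} already covers $g\ge 2$ and the trimming map $\rho$ works whenever $g-3\ge 0$.
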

\begin{proof} The lower bound has been established in Corollary~\ref{cor n'}. We now prove the upper bound. The statement holds for $g=3,4,5$. Let $g \ge 6$, and set $X=\fc(g, q \le 3)$. Consider the partition $X = X_1 \sqcup X_2 \sqcup X_3$, where for $F=(F_0,F_1,F_2) \in X$, we set 
\begin{itemize}
\item $F \in X_1$ $\iff$ $\max(F_0) > \max(F_1)$, 
\item $F \in X_2$ $\iff$ $\max(F_0) = \max(F_1) > \max(F_2)$,
\item $F \in X_3$ $\iff$ $\max(F_0) = \max(F_1) = \max(F_2)$.
\end{itemize}
It follows from Proposition~\ref{images} that $F \in X_1$ if and only if $F \in \im(\alpha_1)$, and $F \in X_2$ if and only if $F \in \im(\alpha_2)$. Thus $|X_1|=n_{g-1}'$, $|X_2|=n_{g-2}'$.
Moreover, it follows from Corollary~\ref{cor max} that $X_3$ may be embedded into $\fc(g-3, q \le 3)$, by removing the common max of $F_0,F_1,F_2$. Whence 
$$|X\setminus (X_1 \sqcup X_2)| \le |\fc(g-3, q \le 3)|=n_{g-3}'$$
and the proof is complete.
\end{proof}

Recall that the \emph{tribonacci sequence} is the integer sequence $(\T_n)_{n \ge 0}$ defined recursively by $\T_0=0$, $\T_1=1$, $\T_2=1$ and $\T_{n}=\T_{n-1}+\T_{n-2}+\T_{n-3}$ for all $n \ge 3$. See e.g. Wikipedia~\cite{wiki}. The first few terms of this sequence are
$$
0,1,1,2,4,7,13,24,44,81,149,274,504,927,1705,\dots
$$
In analogy with the Fibonacci sequence, there is an exact formula for $\T_n$ in terms of the three roots of the polynomial $x^3-x^2-x-1$. The growth rate of this sequence is given by $\T_n/T_{n-1} \to t \sim 1.839$ as $n \to \infty$, where 
$$t = \frac{1+\sqrt[3]{19+3\sqrt{33}}+\sqrt[3]{19-3\sqrt{33}}}3$$ 
is the only real root of $x^3-x^2-x-1$ and is called the \emph{tribonacci constant}. 

\begin{corollary} For all $g \ge 3$, we have
$$
2\F_g \,\le\, n'_g \,\le\, \T_{g+1}.
$$
\end{corollary}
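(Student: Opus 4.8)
The plan is to derive both inequalities from Theorem~\ref{theorem main} together with the known initial values of $n'_g$, by a routine induction on $g$. The recurrence $n'_{g-1}+n'_{g-2} \le n'_g \le n'_{g-1}+n'_{g-2}+n'_{g-3}$ is exactly of Fibonacci-type on the left and tribonacci-type on the right, so each bound should propagate once the base cases are checked. From the Appendix values (or the list $n'_g$ for $g=0,\dots,15$ given in the Introduction), one reads off $n'_1=1$, $n'_2=2$, $n'_3=4$, $n'_4=6$, $n'_5=11$; and $\F_g$ for $g=1,2,3,\dots$ is $1,1,2,3,5,\dots$ while $\T_{g+1}$ for $g=3,4,5$ is $\T_4,\T_5,\T_6 = 4,7,13$.

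For the lower bound $2\F_g \le n'_g$: first I would verify it directly for $g=3$ ($2\F_3 = 4 \le 4 = n'_3$) and $g=4$ ($2\F_4 = 6 \le 6 = n'_4$). Then, assuming $2\F_{g-1} \le n'_{g-1}$ and $2\F_{g-2} \le n'_{g-2}$ for some $g \ge 5$, the left inequality of Theorem~\ref{theorem main} gives $n'_g \ge n'_{g-1}+n'_{g-2} \ge 2\F_{g-1}+2\F_{g-2} = 2\F_g$, closing the induction. Note the induction needs two consecutive base cases because the recurrence has depth two; $g=3,4$ supply them.

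For the upper bound $n'_g \le \T_{g+1}$: I would check $g=3,4,5$ directly against $\T_4=4$, $\T_5=7$, $\T_6=13$, using $n'_3=4$, $n'_4=6$, $n'_5=11$. Then for $g \ge 6$, assuming $n'_{g-1}\le \T_g$, $n'_{g-2}\le \T_{g-1}$, $n'_{g-3}\le \T_{g-2}$, the right inequality of Theorem~\ref{theorem main} gives $n'_g \le n'_{g-1}+n'_{g-2}+n'_{g-3} \le \T_g+\T_{g-1}+\T_{g-2} = \T_{g+1}$. Here the induction requires three consecutive base cases, which is why we need to verify $g=3,4,5$ explicitly; the tribonacci recurrence $\T_{n}=\T_{n-1}+\T_{n-2}+\T_{n-3}$ then does the rest.

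There is no real obstacle; the only thing to be careful about is bookkeeping of indices and of exactly which small values of $g$ must be treated by hand before the induction engages — two base cases for the Fibonacci-type lower bound and three for the tribonacci-type upper bound — and confirming those small values from the tabulated data. One minor point worth a sentence in the write-up: Theorem~\ref{theorem main} only asserts the recurrence for $g \ge 3$, which is more than enough to run both inductions from the stated base cases, so the corollary holds for all $g \ge 3$ as claimed.
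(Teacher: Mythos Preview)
Your proof is correct and follows essentially the same approach as the paper's: verify small base cases directly from the tabulated values of $n'_g$, then induct using the Fibonacci recurrence for the lower bound and the tribonacci recurrence for the upper bound, invoking Theorem~\ref{theorem main} at each step. The only cosmetic difference is that the paper anchors the inductions at slightly smaller $g$ (using $(n'_2,n'_3)=(2,4)=(2\F_2,2\F_3)$ for the lower bound and $(n'_1,n'_2,n'_3)=(1,2,4)=(\T_2,\T_3,\T_4)$ for the upper bound), whereas you start at $g=3,4$ and $g=3,4,5$ respectively; this is immaterial.
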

Note that the claimed inequality $n'_g \ge 2\F_g$ is a strengthening of the inequality $n_g \ge 2\F_g$ proved in \cite{Br09}.
\begin{proof} We have $(n_2',n_4')=(2,4)=(2\F_2,2\F_3)$. Since $n_{g+2}' \ge n_{g+1}'+n_{g}'$ and $\F_{g+2} = \F_{g+1}+\F_{g}$ for  all $g \ge 0$, the inequality $n_{g}' \ge 2\F_{g}$ follows by induction on $g$.

As for the upper bound, we have $(n_1',n_2',n_3') = (1,2,4)=(\T_2,\T_3,\T_4)$. By Theorem~\ref{theorem main} and the recurrence relation of the $\T_n$, the inequality $n'_g \,\le\, \T_{g+1}$ again follows by induction on $g$.
\end{proof}

Going to higher genus yields better estimates.
\begin{corollary} For all $g \ge 58$, we have
$$
7.8\F_g \,\le\, n'_g \,\le\, \frac{7}{1000}\T_{g+1}.
$$
\end{corollary}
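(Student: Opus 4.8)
The plan is to repeat, essentially verbatim, the two induction arguments of the preceding corollary, the only change being that they are now launched from base cases at higher genus, where the ratios $n'_g/\F_g$ and $n'_g/\T_{g+1}$ have drifted closer to their limiting values. Both arguments are elementary bootstraps off the recursive bounds already established: the lower bound $n'_g \ge n'_{g-1}+n'_{g-2}$ of Corollary~\ref{cor n'} and the upper bound $n'_g \le n'_{g-1}+n'_{g-2}+n'_{g-3}$ of Theorem~\ref{theorem main}.

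For the lower bound, I would first observe that since $\F_g = \F_{g-1}+\F_{g-2}$, any constant $\lambda$ with $n'_{g_0}\ge \lambda\F_{g_0}$ and $n'_{g_0+1}\ge \lambda\F_{g_0+1}$ propagates upward: $n'_g\ge n'_{g-1}+n'_{g-2}\ge \lambda(\F_{g-1}+\F_{g-2})=\lambda\F_g$ for all $g\ge g_0$. Taking $g_0=58$ and $\lambda=7.8$, it then suffices to check the two inequalities $n'_{58}\ge 7.8\,\F_{58}$ and $n'_{59}\ge 7.8\,\F_{59}$, using the values of $n'_{58},n'_{59}$ from the Appendix and the known Fibonacci numbers $\F_{58},\F_{59}$.

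For the upper bound, the same bootstrap applies with the tribonacci recurrence $\T_{g+1}=\T_g+\T_{g-1}+\T_{g-2}$: if $n'_g\le \mu\,\T_{g+1}$ for three consecutive values $g=g_0,g_0+1,g_0+2$, then $n'_g\le n'_{g-1}+n'_{g-2}+n'_{g-3}\le \mu(\T_g+\T_{g-1}+\T_{g-2})=\mu\,\T_{g+1}$ for all $g\ge g_0$. Taking $g_0=58$ and $\mu=7/1000$, it suffices to verify $n'_{58}\le \tfrac{7}{1000}\T_{59}$, $n'_{59}\le \tfrac{7}{1000}\T_{60}$ and $n'_{60}\le \tfrac{7}{1000}\T_{61}$, all three of which involve only integers from (or immediately computable from) the Appendix; in particular all the base data lies within the range $g\le 60$ that the Appendix records.

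There is no conceptual obstacle here; the only substance is the finite numerical check of these five base inequalities. The mild subtlety worth flagging is that the threshold $g\ge 58$ is precisely what the chosen constants $7.8$ and $7/1000$ demand: for somewhat smaller $g$ the ratio $n'_g/\F_g$ has not yet risen to $7.8$, and $n'_g/\T_{g+1}$ has not yet fallen to $7/1000$, so the inductions genuinely cannot be started earlier with these particular constants — one should confirm from the table that $g=58$ is indeed the first genus for which both base inequalities and their immediate successors hold.
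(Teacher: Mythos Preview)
Your proposal is correct and follows essentially the same approach as the paper: verify the base inequalities numerically at $g=58,59,60$ using the Appendix data, then propagate via the recursive bounds of Corollary~\ref{cor n'} and Theorem~\ref{theorem main} together with the Fibonacci and tribonacci recurrences. You are in fact slightly sharper than the paper in noting that only two base cases are needed for the Fibonacci lower bound, whereas the paper simply checks all three values $g=58,59,60$ for both bounds.
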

\begin{proof} We have
{\small 
\begin{eqnarray*}
(n_{58}',n_{59}',n_{60}') & = & (4615547228454,7504199621406,12197944701688) \\
(\F_{58},\F_{59},\F_{60}) & = & (591286729879, 956722026041,1548008755920) \\
(\T_{58},\T_{59},\T_{60}) & = & (752145307699165,1383410902447554,2544489349890656). 
\end{eqnarray*}
}
The stated inequalities hold for $g=58,59,60$ and hence for all $g \ge 58$ by Theorem~\ref{theorem main} and the recurrence relations of the Fibonacci and the tribonacci numbers. In fact, for the lower bound we have the slightly better estimate
\begin{equation}\label{better}
n'_g \ge 7.84\F_g
\end{equation} 
for all $g \ge 59$, since it holds for $g=59,60$.
\end{proof}

\begin{corollary} For all $g \ge 59$, we have
\begin{equation}\label{still better}
n_g \, \ge \, 7.84 \F_g.
\end{equation}
\end{corollary}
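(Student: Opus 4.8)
The plan is short: the statement is an immediate consequence of the trivial inclusion of the numerical semigroups of depth $q \le 3$ among all numerical semigroups, combined with the lower bound \eqref{better} just obtained.

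First I would record the inequality $n_g \ge n'_g$, valid for all $g \ge 0$. Indeed, $n'_g$ counts the numerical semigroups of genus $g$ with $c \le 3m$, i.e.\ those of depth $q \le 3$, and these form a subset of the $n_g$ numerical semigroups of genus $g$. (Equivalently, in the terminology of Section~\ref{section gapsets}, $\fc(g, q \le 3) \subseteq \fc(g)$, so that $n'_g = |\fc(g, q \le 3)| \le |\fc(g)| = n_g$; in tree terms, the subtree $\mathcal{T}'$ sits inside $\mathcal{T}$ level by level, so its level counts are dominated by those of $\mathcal{T}$.) Then, for $g \ge 59$, the estimate \eqref{better} gives $n'_g \ge 7.84\,\F_g$, and composing the two inequalities yields $n_g \ge n'_g \ge 7.84\,\F_g$, which is precisely \eqref{still better}.

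There is essentially no obstacle here; the only point to verify is that the genus threshold in \eqref{better}, namely $g \ge 59$, matches the one claimed in the corollary, which it does. It is worth noting that this already improves on the bound $n_g \ge 2\F_g$ of \cite{Br09}, and that establishing \eqref{better} from a base case at even larger genus would presumably raise the constant $7.84$ still further.
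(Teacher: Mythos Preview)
Your proof is correct and matches the paper's own argument exactly: it simply combines the trivial inequality $n_g \ge n'_g$ with the bound \eqref{better}.
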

\begin{proof}
Follows from the inequality $n_g \ge n'_g$ and \eqref{better}.
\end{proof}
As far as we know, inequality \eqref{still better} is the best currently available lower bound on $n_g$ for large $g$.

\section{The gapsets graph}\label{section graph}

In this section, we show that the tree $\mathcal{T}$ is naturally embedded in a larger graph, which is easy to describe in terms of gapsets, or gapset filtrations, and which was actually discovered in this language.

\begin{definition} Let $F,F'$ be gapset filtrations of genus $g,g'$ respectively, where
$F=(F_0,\dots, F_{q-1})$, $F'=(F_0',\dots, F_{q'-1}')$. We put an edge between $F,F'$ if
\begin{itemize}
\item $g'=g+1$
\item $F'$ is a refinement of $F$, i.e. if $F_i' \supseteq F_i$ for all $i$.
\end{itemize}
\end{definition}

Clearly, all edges of the original tree $\mathcal{T}$ remain edges in the above new sense. But now new edges appear. In the figure below, the new edges are the dotted ones.  


\hspace{-0.9cm}
\begin{tikzpicture}[scale=1.4]

\coordinate (A) at (4.5,4);

\coordinate (B) at (4.5,3);

\coordinate (C1) at (3,2);
\coordinate (C2) at (6,2);

\coordinate (D1) at (1.5,1);
\coordinate (D2) at (4,1);
\coordinate (D3) at (5.5,1);
\coordinate (D4) at (7,1);

\coordinate (E1) at (-0.4,0);
\coordinate (E2) at (0.7,0);
\coordinate (E3) at (1.9,0);
\coordinate (E4) at (3.1,0);
\coordinate (E5) at (4.3,0);
\coordinate (E6) at (5.7,0);
\coordinate (E7) at (7.5,0);

\draw [fill] (A) circle (0.06) node [right] {$\emptyset$};

\draw [fill] (B) circle (0.06) node [right] {\,$(1)$};

	\draw [thick] (A)--(B);

\draw [fill] (C1) circle (0.06) node [left] {$(12)$\,\,};
\draw [fill] (C2) circle (0.06) node [right] {\,$(1)(1)$};

	\draw [thick] (B)--(C1);
	\draw [thick] (B)--(C2);

\draw [fill] (D1) circle (0.06) node [left] {$(123)$\,\,};
\draw [fill] (D2) circle (0.06) node [left] {$(12)(1)$\,};
\draw [fill] (D3) circle (0.06) node [right] {$(12)(2)$};
\draw [fill] (D4) circle (0.06) node [right] {$(1)(1)(1)$};

	\draw [thick] (C1)--(D1);
	\draw [thick] (C1)--(D2);
	\draw [thick] (C1)--(D3);
	\draw [thick] (C2)--(D4);
	
\draw [fill] (E1) circle (0.06) node [below] {$(1234)$};
\draw [fill] (E2) circle (0.06) node [below] {$(123)(1)$};
\draw [fill] (E3) circle (0.06) node [below] {$(123)(2)$};
\draw [fill] (E4) circle (0.06) node [below] {$(123)(3)$};
\draw [fill] (E5) circle (0.06) node [below] {$(12)(12)$};
\draw [fill] (E6) circle (0.06) node [below] {$(12)(1)(1)$};
\draw [fill] (E7) circle (0.06) node [below] {$(1)(1)(1)(1)$};

	\draw [thick] (D1)--(E1);
	\draw [thick] (D1)--(E2);
	\draw [thick] (D1)--(E3);
	\draw [thick] (D1)--(E4);
	\draw [thick] (D2)--(E5);
	\draw [thick] (D2)--(E6);
	\draw [thick] (D4)--(E7);

\draw [dashed] (C2)--(D2);
\draw [dashed] (D2)--(E2);
\draw [dashed] (D3)--(E3);
\draw [dashed] (D3)--(E5);
\draw [dashed] (D4)--(E6);

\end{tikzpicture}

\begin{remark}
While the original tree $\mathcal{T}$ has many leaves, i.e. vertices of degree $1$, this is no longer the case in our graph: every vertex has at least one child, as easily seen.
\end{remark}

Note also that the injective maps described in preceding sections use the new edges of this graph, not those of its subtree $\mathcal{T}$.

\section{Going further}\label{section further}

Here we characterize gapset filtrations of multiplicity $3$ and $4$, respectively. The proofs will appear in a subsequent paper using more tools. We also formulate two conjectures both implying $n_{g+1} \ge n_{g}$ for all $g \ge 0$.

\subsection{The case $m=3$}
\begin{theorem}\label{thm m=3} Let $a \ge 1, b \ge 0$. Then 
\begin{itemize}
\item $(12)^a(1)^b$ is a gapset filtration if and only if $b \le a+1$
\item $(12)^a(2)^b$ is a gapset filtration if and only if $b \le a$.
\end{itemize}
\end{theorem}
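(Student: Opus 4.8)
The plan is to reduce both equivalences to the classical description of numerical semigroups of multiplicity $3$. Since in each case the first block of the filtration is $\{1,2\}=[1,m-1]$, we are in multiplicity $m=3$, and writing $F$ for either $(12)^a(1)^b$ or $(12)^a(2)^b$ and $G=\tau(F)$ for the associated set, the statement ``$F$ is a gapset filtration'' is by definition ``$G$ is a gapset'', equivalently ``$S:=\N\setminus G$ is a numerical semigroup''. Because $3\notin G$, the set $S$ is stable under adding $3$; hence, putting $s_r=\min\{s\in S\mid s\equiv r \pmod 3\}$ for $r=1,2$, we have a partition $S = 3\N \sqcup (s_1+3\N) \sqcup (s_2+3\N)$. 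Checking closure of $S$ under addition residue class by residue class, the only non-automatic requirements are $(s_1+3\N)+(s_1+3\N)\subseteq S$ and $(s_2+3\N)+(s_2+3\N)\subseteq S$. The first sum equals $2s_1+3\N$, whose elements are all $\equiv 2\pmod 3$, so it lies in $S$ exactly when $2s_1\ge s_2$; symmetrically the second amounts to $2s_2\ge s_1$. So the first step is to record the criterion: \emph{$G$ is a gapset if and only if $2s_1\ge s_2$ and $2s_2\ge s_1$.}

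The second step is to read off $s_1$ and $s_2$ from the canonical partition $G=\bigsqcup_i G_i$ with $G_i=3i+F_i$. For $F=(12)^a(1)^b$ one has $G_i=\{3i+1,3i+2\}$ for $0\le i\le a-1$ and $G_i=\{3i+1\}$ for $a\le i\le a+b-1$; thus the elements of $G$ that are $\equiv 1\pmod 3$ are precisely $3i+1$ for $0\le i\le a+b-1$, giving $s_1=3(a+b)+1$, while those $\equiv 2\pmod 3$ are $3i+2$ for $0\le i\le a-1$, giving $s_2=3a+2$ (the degenerate case $b=0$, where the tail blocks are absent, is included). Then $2s_1\ge s_2$ reads $6(a+b)+2\ge 3a+2$ and holds for all $a\ge 1,\ b\ge 0$, whereas $2s_2\ge s_1$ reads $6a+4\ge 3(a+b)+1$, i.e. $b\le a+1$. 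This gives the first equivalence.

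For $F=(12)^a(2)^b$ the same bookkeeping gives $G_i=\{3i+1,3i+2\}$ for $0\le i\le a-1$ and $G_i=\{3i+2\}$ for $a\le i\le a+b-1$, hence $s_1=3a+1$ and $s_2=3(a+b)+2$. Now $2s_2\ge s_1$ is automatic, while $2s_1\ge s_2$ reads $6a+2\ge 3(a+b)+2$, i.e. $b\le a$, which is the second equivalence. Thus Theorem~\ref{thm m=3} will follow.

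There is no genuinely hard step: the content is entirely in the two steps above, and the only points needing care are (i) the verification that residue-wise closure really collapses to the two threshold inequalities — one must observe that $2s_1+3\N$ and $2s_2+3\N$ each lie in a single residue class, so that membership in $S$ is governed solely by comparison with $s_2$ (resp. $s_1$) — and (ii) correctly identifying $s_1,s_2$ from the filtration, including the boundary case $b=0$ and the check that $3(a+b)+1$ (resp. $3(a+b)+2$) indeed lies in $S$ because $G$ has depth $a+b$, so $G_i=\emptyset$ for $i\ge a+b$. A fully self-contained variant would bypass $S$ and instead verify the gapset condition $z=x+y\Rightarrow x\in G$ or $y\in G$ directly for each $z\in G$; since the only problematic decompositions are of the numbers $2s_1-3k$ and $2s_2-3k$, this is just a repackaging of the same computation and offers no real saving.
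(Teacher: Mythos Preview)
Your proof is correct. The paper itself does not prove this theorem; it explicitly states that ``The proofs will appear in a subsequent paper using more tools.'' Your argument, by contrast, is entirely elementary: it rests on the standard Ap\'ery-set (or Kunz-coordinate) description of numerical semigroups of multiplicity $3$, namely that $S=3\N\sqcup(s_1+3\N)\sqcup(s_2+3\N)$ is a numerical semigroup if and only if $2s_1\ge s_2$ and $2s_2\ge s_1$. The only point that deserves an explicit sentence (and which you do note in passing) is that the closure of $S$ under adding $3$ follows from $G=\tau(F)$ being a $3$-extension, i.e.\ from $G_{i+1}\subseteq 3+G_i$; this is what guarantees $S\cap(r+3\N)=s_r+3\N$ \emph{before} one knows $S$ is additively closed. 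With that in hand, reading off $s_1,s_2$ from the filtration and checking the two threshold inequalities is routine, and your computations are correct in both cases, including the boundary case $b=0$.
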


\begin{corollary}\label{cor m=3} For all $g \ge 1$, there is an explicit injection
$$\fc(g, m=3) \longrightarrow \fc(g+1, m=3).$$
\end{corollary}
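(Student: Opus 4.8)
The goal is to turn the explicit classification in Theorem~\ref{thm m=3} into a genus-preserving-plus-one injection $\fc(g,m=3)\to\fc(g+1,m=3)$. The plan is first to enumerate $\fc(g,m=3)$ precisely. A gapset filtration of multiplicity $3$ has $F_0=\{1,2\}$, and each subsequent piece is a subset of the previous one, hence each $F_i$ for $i\ge 1$ is one of $\{1,2\}$, $\{1\}$, $\{2\}$, or $\emptyset$; since the sequence is nonincreasing and we drop trailing empty pieces, every such filtration has the shape $(12)^a(1)^b$ or $(12)^a(2)^b$ with $a\ge 1$, $b\ge 0$ (the case $b=0$ being $(12)^a$ common to both, which I would handle once). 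By Theorem~\ref{thm m=3}, the valid ones are exactly $(12)^a(1)^b$ with $0\le b\le a+1$ and $(12)^a(2)^b$ with $0\le b\le a$. The genus is $g=2a+b$ in the first family and $g=2a+b$ in the second as well (each $(12)$ contributes $2$, each $(1)$ or $(2)$ contributes $1$).

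Next I would define the injection by an explicit rule on these normal forms that raises $g$ by exactly $1$ while staying inside the valid range. The natural move is to append a piece or lengthen the tail: for a filtration of the form $(12)^a(1)^b$ with $b\le a+1$, send it to $(12)^a(1)^{b+1}$ whenever $b+1\le a+1$, i.e. whenever $b\le a$; the only leftover case is $b=a+1$, i.e. the filtration $(12)^a(1)^{a+1}$ of genus $3a+1$, which I would send to $(12)^{a+1}(1)^{0}=(12)^{a+1}$ of genus $2a+2$ — but that does not raise the genus by $1$ in general, so this particular matching needs adjusting. A cleaner uniform rule: map $(12)^a(1)^b\mapsto (12)^{a+1}(1)^b$ if $b\le a+1$ (still valid since $b\le a+1\le (a+1)+1$, genus goes up by $2$) — no again wrong increment. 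So the real content is to choose the rule so the genus increases by exactly $1$: the safe operations are (i) $(12)^a(1)^b\to(12)^a(1)^{b+1}$ when $b<a+1$, (ii) $(12)^a(2)^b\to(12)^a(2)^{b+1}$ when $b<a$, and (iii) for the two boundary families $(12)^a(1)^{a+1}$ and $(12)^a(2)^a$ I would instead increment $a$ and simultaneously adjust $b$ downward to keep the net genus change equal to $+1$, e.g. $(12)^a(2)^a\mapsto (12)^{a+1}(2)^{a-1}$ (genus $2a+a=3a \mapsto 2(a+1)+(a-1)=3a+1$, valid since $a-1\le a+1$), and $(12)^a(1)^{a+1}\mapsto(12)^{a+1}(1)^{a}$ (genus $3a+1\mapsto 3a+2$, valid since $a\le a+2$). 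I would then check injectivity by verifying the images of the various cases are disjoint — the boundary cases land in the interior of the range, while the generic cases stay on their own curve $g'=g+1$ with the same $a$, so tracking the pair $(a,b)$ and which family one is in suffices.

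The verification splits into a small finite case analysis: (a) the map is well-defined, i.e. each image is again a valid gapset filtration of multiplicity $3$ (immediate from Theorem~\ref{thm m=3} once the inequalities $b'\le a'+1$ resp. $b'\le a'$ are checked in each case), (b) genus increases by exactly $1$ in every case, and (c) injectivity. For (c) the key point is that from a normal form $(12)^{a'}(1)^{b'}$ or $(12)^{a'}(2)^{b'}$ in the image one must be able to recover which case produced it; I would argue that the generic increments never produce a boundary filtration $(12)^{a'}(1)^{a'+1}$ or $(12)^{a'}(2)^{a'}$ (a quick check on the inequalities), so the three cases have pairwise disjoint images and each is individually injective.

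**Expected main obstacle.** The only real subtlety is bookkeeping: making sure the genus-increment rule is consistent across the two families and their common overlap $(12)^a$ ($b=0$), and that the boundary-case redefinitions do not collide with the generic images. There is also a genuine small worry at the very smallest genera — one should check $g=1$ ($(1)(1)$? no: $m=3$ forces $F_0=\{1,2\}$, so the smallest is $g=2$, $(12)$) — so the statement "for all $g\ge 1$" may need the trivial remark that $\fc(1,m=3)=\emptyset$, making the claim vacuous there, or the indexing starts effectively at $g=2$. Once the normal-form parametrization and Theorem~\ref{thm m=3} are in hand, none of the steps is deep; the proof is essentially a careful finite case check, which is why it is deferred to the subsequent paper together with the $m=4$ analysis.
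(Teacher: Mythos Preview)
The paper does not actually prove this corollary: both Theorem~\ref{thm m=3} and Corollary~\ref{cor m=3} are announced with the remark that ``the proofs will appear in a subsequent paper using more tools.'' So there is no argument in the paper to compare against; your overall strategy --- parametrize $\fc(g,m=3)$ via Theorem~\ref{thm m=3} and then write down an explicit case-by-case map raising the genus by one --- is exactly the natural route and almost certainly the intended one.

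That said, your concrete map is \emph{not} injective as written. Take $g=4$. The two gapset filtrations of multiplicity $3$ and genus $4$ are $(12)^2$ and $(12)(1)^2$. Under your rules, $(12)^2$ is in the generic $(1)$-case ($a=2$, $b=0<a+1$) and is sent to $(12)^2(1)$; while $(12)(1)^2$ is the boundary case $b=a+1$ with $a=1$ and is sent to $(12)^{2}(1)^{1}=(12)^2(1)$ as well. Both land on the same image. More generally, your boundary rule $(12)^a(1)^{a+1}\mapsto(12)^{a+1}(1)^{a}$ always collides with the generic rule applied to $(12)^{a+1}(1)^{a-1}$, since the latter also produces $(12)^{a+1}(1)^{a}$. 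Your injectivity check (``generic increments never produce a boundary filtration'') guards against the wrong thing: the problem is not that generic images become boundary filtrations, but that boundary images become \emph{interior} filtrations already hit by the generic rule. The analogous collision occurs in the $(2)$-family if you try to reroute there.

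The fix requires a more careful matching. One clean way is to note (from the classification) that $|\fc(g,m=3)|=\lfloor g/3\rfloor+1$ for $g\ge 2$, so for $g\not\equiv 2\pmod 3$ you need a bijection, and for $g\equiv 2\pmod 3$ an injection missing exactly one element; a uniform rule must therefore be chosen so that the single ``new'' element of $\fc(g+1,m=3)$ at $g\equiv 2\pmod3$ is exactly the one left uncovered. This is still elementary bookkeeping, but it is a genuine gap in your proposal as it stands.
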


\subsection{The case $m=4$}

\begin{theorem}\label{thm m=4} Let $a \ge 1, b,c \ge 0$. Then 
\begin{itemize}
\item $(123)^a(12)^b(1)^c$ is a gapset filtration $\iff$ $b,c \le a+1$
\item $(123)^a(12)^b(2)^c$ is a gapset filtration $\iff$ $b+c \le a+1, \,c \le a+b$ 
\item $(123)^a(13)^b(1)^c$ is a gapset filtration $\iff$ $c \le a+1$ 
\item $(123)^a(13)^b(3)^c$ is a gapset filtration $\iff$ $c \le a$ 
\item $(123)^a(23)^b(2)^c$ is a gapset filtration $\iff$ $b+c \le a$ 
\item $(123)^a(23)^b(3)^c$ is a gapset filtration $\iff$ $b,c \le a$.
\end{itemize}
\end{theorem}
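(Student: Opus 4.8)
The plan is to prove one uniform criterion deciding, for an arbitrary $4$-filtration $F=(F_0,\dots,F_{q-1})$ with $F_0=[1,3]$, whether the associated $4$-extension $\tau(F)$ is a gapset, and then obtain the six equivalences of Theorem~\ref{thm m=4} by substituting the explicit templates. Note first that the six shapes listed (with $a\ge 1$, $b,c\ge 0$) exhaust \emph{all} multiplicity-$4$ gapset filtrations: such a filtration is a nonincreasing chain of subsets of $\{1,2,3\}$ beginning at $\{1,2,3\}$, and any such chain passes through at most one $2$-element set and then at most one $1$-element set. So it suffices to decide the gapset property for the $4$-extension of each template.

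For the reduction, write $S=\N\setminus\tau(F)$; then $\tau(F)$ is a gapset iff no element of $\tau(F)$ is a sum of two positive elements of $S$. Writing an integer as $4i+r$ with $r\in\{0,1,2,3\}$, the canonical partition gives $4i+r\in S$ iff $r=0$ or ($r\ne 0$ and $r\notin F_i$), where $F_i=\emptyset$ past the length of $F$. For $j\in\{1,2,3\}$ put $t_j=\max\{i:j\in F_i\}$, so that $j\in F_i\iff i\le t_j$ by monotonicity. A bad decomposition of $4i+j\in\tau(F)$ has the form $(4p+r)+(4q+r')$ with $p,q\ge 0$, $4p+r,4q+r'\in S$ and $r+r'\equiv j\pmod 4$. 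Running through the residues one checks that every pattern with $\min(r,r')=0$ is impossible (it would force $j\notin F_p$ for some $p\le i-1$, contradicting $j\in F_i\subseteq F_p$), and among the mixed patterns only $(r,r')=(2,3)$ for $j=1$, $(1,1)$ and $(3,3)$ for $j=2$, and $(1,2)$ for $j=3$ are relevant (the remaining ones have $r+r'\equiv 0$, irrelevant since $\tau(F)$ contains no residue-$0$ integer). Imposing $i\le t_j$ and keeping only the binding instances (at $i=t_1,t_2,t_3$) collapses the absence of bad decompositions to the four inequalities
$$t_1\le t_2+t_3+2,\qquad t_2\le 2t_1+1,\qquad t_2\le 2t_3+2,\qquad t_3\le t_1+t_2+1,$$
and the criterion to be proved is: $\tau(F)$ is a gapset iff these four hold. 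The forward implication is exactly the case analysis above; the converse is immediate, since, e.g., a violation $t_2\ge 2t_3+3$ exhibits the bad sum $(4(t_3+1)+3)+(4(t_3+1)+3)=4(2t_3+3)+2$, which lies in $\tau(F)$ because $2t_3+3\le t_2$, while both summands lie in $S$ because $3\notin F_{t_3+1}$; the other three inequalities fail in a completely analogous way.

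The last step is pure substitution. For $(123)^a(12)^b(1)^c$ one reads off $(t_1,t_2,t_3)=(a+b+c-1,\,a+b-1,\,a-1)$; then $t_1\le t_2+t_3+2$ becomes $c\le a+1$, $t_2\le 2t_3+2$ becomes $b\le a+1$, and the remaining two inequalities are automatic, giving the stated condition $b,c\le a+1$. The other five templates are treated identically: for instance $(123)^a(12)^b(2)^c$ has $(t_1,t_2,t_3)=(a+b-1,\,a+b+c-1,\,a-1)$, with $t_2\le 2t_1+1$ giving $c\le a+b$ and $t_2\le 2t_3+2$ giving $b+c\le a+1$; $(123)^a(23)^b(2)^c$ has $(t_1,t_2,t_3)=(a-1,\,a+b+c-1,\,a+b-1)$, with $t_2\le 2t_1+1$ giving $b+c\le a$ (which makes the others automatic); and $(123)^a(23)^b(3)^c$ has $(t_1,t_2,t_3)=(a-1,\,a+b-1,\,a+b+c-1)$, with $t_2\le 2t_1+1$ giving $b\le a$ and $t_3\le t_1+t_2+1$ giving $c\le a$. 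Each line is a one-step arithmetic simplification once the criterion is available.

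The main obstacle is getting the reduction exactly right: one must handle carefully the membership rule for $4p+r\in S$ at the boundary (the cases $r=0$, $p=0$, and indices beyond the depth), the degenerate level $i=0$ where $j\le 3$ only decomposes inside $F_0=[1,3]$, and, above all, the exhaustiveness of the list of relevant residue patterns — a single omitted pattern would invalidate the criterion. (For larger multiplicities the same scheme still works, but the pattern list lengthens and carries may exceed $1$; for $m=4$ it stays very short.) After the four-inequality criterion is in place, nothing but routine arithmetic remains.
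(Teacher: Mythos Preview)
The paper does not actually prove this theorem: it announces that ``the proofs will appear in a subsequent paper using more tools.'' So there is no in-paper argument to compare against, and your proposal must stand on its own.

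Your approach is correct. The integers $t_1,t_2,t_3$ are, up to the shift $k_j=t_j+1$, the Kunz coordinates of the semigroup $\N\setminus\tau(F)$ at multiplicity $4$, and your four inequalities
\[
t_1\le t_2+t_3+2,\qquad t_2\le 2t_1+1,\qquad t_2\le 2t_3+2,\qquad t_3\le t_1+t_2+1
\]
are exactly the Kunz conditions $k_2\le 2k_1$, $k_3\le k_1+k_2$, $k_1\le k_2+k_3+1$, $k_2\le 2k_3+1$ cutting out the Kunz cone for $m=4$. Your residue-by-residue derivation is the standard one, and all six substitutions check out; in each case the two non-binding inequalities are automatically satisfied for $a\ge 1$, $b,c\ge 0$, and the binding ones give precisely the stated conditions.

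Two cosmetic remarks. In the sentence on patterns with $\min(r,r')=0$, the index in ``$j\notin F_p$'' should be $i-p$ rather than $p$. And what you label the ``converse'' (exhibiting a bad sum from a violated inequality) is actually the contrapositive of the forward direction; the genuine converse --- that the four inequalities together \emph{imply} the gapset property --- rests on the exhaustiveness of your list of residue patterns, a point you do stress at the end. Neither remark affects the mathematics.

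Since the paper hints at ``more tools'', it is worth noting that your argument is entirely self-contained: for $m=4$ the Kunz criterion has only four inequalities and nothing beyond the definition of a gapset is needed.
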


\begin{corollary}\label{cor m=4} For all $g \ge 1$, there is an explicit injection
$$\fc(g, m=4) \longrightarrow \fc(g+1, m=4).$$
\end{corollary}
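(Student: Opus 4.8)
The plan is to deduce Corollary~\ref{cor m=4} directly from the explicit classification in Theorem~\ref{thm m=4} (the companion Corollary~\ref{cor m=3} being the same phenomenon in a simpler guise, obtained the same way from Theorem~\ref{thm m=3}). Theorem~\ref{thm m=4} identifies $\fc(g,m=4)$ with the set of triples $(a,b,c)\in\N_+\times\N\times\N$ satisfying $3a+2b+c=g$ and lying in one of six explicitly described ``staircase'' regions, one for each chain $(T,U)\in\{(12,1),(12,2),(13,1),(13,3),(23,2),(23,3)\}$, once one fixes a canonical relabelling for the overlaps that occur when $b=0$ (the value of $T$ being then immaterial) or $c=0$ (the value of $U$ being then immaterial). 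So the problem becomes purely combinatorial: for each $g\ge1$, exhibit an explicit injection from the genus-$g$ lattice points of these six regions to the genus-$(g+1)$ lattice points.

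The basic building block is a ``carry'' on a single family: send $(a,b,c)$ to $(a,b,c+1)$ whenever this stays inside the region, and otherwise, i.e.\ when $c$ is at the top of its allowed range, apply a move raising $3a+2b+c$ by exactly $1$, for instance $(a,b,c)\mapsto(a,b+1,c-1)$ when $c\ge1$, or $(a,b,c)\mapsto(a+1,b-1,c)$ when $c=0$ (so that necessarily $b\ge1$), checking via the defining inequalities that the result lands back inside the appropriate region, possibly after the canonical relabelling when an exponent drops to $0$. I would first spell this out for the simplest family $(123)^a(12)^b(1)^c$, whose region $\{0\le b,c\le a+1\}$ is a square, and then for the remaining five, whose regions carry one or two further linear inequalities.

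The delicate point, and the reason a cleaner tool-based argument is deferred to the announced sequel, is injectivity. In a fixed family with a fixed value of $a$ the number of genus-$g$ lattice points is a \emph{unimodal}, not monotone, function of $g$ (visibly so for the square family), so the naive ``increment $c$, else carry'' rule double-hits targets near the peak: in the square family with $a=2$, both $(123)^2(1)^3$ (carried via $(b,c)=(0,3)\mapsto(1,2)$) and $(123)^2(12)(1)$ (via $(b,c)=(1,1)\mapsto(1,2)$) have genus $9$ and both are sent to $(123)^2(12)(1)^2$. Repairing this forces the overflow from the high-genus end of each $a$-slice to cascade into the slice $a+1$, and, when an entire family is locally saturated, into one of the other five families, in a prescribed order; one then has to verify that no two sources ever collide. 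Thus the proof would consist of: (i) the reduction to the six staircase regions via Theorem~\ref{thm m=4}; (ii) a bookkeeping lemma ordering the regions and their $a$-slices so that the genus-$g$ points inject into the genus-$(g+1)$ points by a sequence of carries with controlled overflow; and (iii) checking that the resulting piecewise-defined map is given by explicit formulas and is globally injective. Steps (ii)--(iii) are where the real work lies, since one is really solving a genuinely non-local combinatorial matching rather than performing a one-line operation on filtrations — which is presumably why the authors postpone the details.
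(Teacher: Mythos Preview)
The paper does not prove this corollary. Section~\ref{section further} explicitly announces that ``the proofs will appear in a subsequent paper using more tools,'' and Corollary~\ref{cor m=4} is stated without argument. So there is no proof in the paper to compare your proposal against.

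Your proposal is not a proof either, and you say so yourself. You correctly reduce the problem, via Theorem~\ref{thm m=4}, to an explicit combinatorial matching on six staircase regions in $\N_+\times\N\times\N$ sliced by the hyperplanes $3a+2b+c=g$. You then propose a natural ``increment $c$, else carry'' rule, exhibit a concrete collision showing that this rule is not injective (your example with $a=2$ in the square family is correct: both $(2,0,3)$ and $(2,1,1)$ land on $(2,1,2)$), and conclude that the genuine work lies in steps (ii)--(iii), which you do not carry out. That is an honest assessment of the situation, but it leaves the corollary unproved: you have described the shape of the difficulty rather than resolved it.

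In short: the paper defers the proof; your proposal identifies the same natural route the paper's framing suggests (classification via Theorem~\ref{thm m=4}, then explicit matching) and correctly diagnoses why the obvious first attempt fails, but stops at the point where the actual argument would have to begin. Neither constitutes a proof of Corollary~\ref{cor m=4}.
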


Here is a hopefully temporary paradox. The subtrees of $\mathcal{T}$ for $m=3$ and for $m=4$ grow pretty slowly, but proving their growth via Theorems~\ref{thm m=3} and \ref{thm m=4} and their corollaries is relatively easy and will be done in a forthcoming paper. On the other hand, computations show that the larger $m$ is, the more vigorous the growth of the corresponding subtree is. However, \emph{proving} that growth is still an open problem.

\subsection{Two conjectures}

We conclude this paper with two conjectures which would both imply Conjecture~\ref{simple growth}, namely $n_{g+1} \ge n_{g}$ for all $g \ge 0$. The first one would further confirm that `most' numerical semigroups are of depth $q \le 3$.

\begin{conjecture}\label{n'/n} One has $n_{g+1}' \ge 1.38\, n_g$\, for all $g \ge 1$, and $n_{g+1}' \ge 1.5\, n_g$ for all $g \ge 49$.
\end{conjecture}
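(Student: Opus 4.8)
The plan is to split the range of $g$ into a finite initial segment, to be handled by direct computation, and an infinite tail, to be handled by asymptotic estimates. The values of $n_g$ and $n'_g$ are known for $g$ up to $50$ and $60$ respectively (see the Appendix for $n'_g$), which verifies both inequalities on that range; the content is the tail $g \gg 0$. One should keep in mind at the outset that, since $n'_{g+1} \le n_{g+1}$, this conjecture implies $n_{g+1} \ge 1.38\, n_g$, hence in particular Conjecture~\ref{simple growth}, which is still open; so what follows is a roadmap rather than a claimed solution.

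For the tail, the idea is to feed the estimates already proved for $n'_g$ into a comparison with $n_g$. By Corollary~\ref{cor n'} and Theorem~\ref{theorem main} one has, for $g \ge 3$,
\[
n'_{g+1} \ \ge\ n'_g + n'_{g-1}, \qquad n'_g \ \le\ n'_{g-1} + n'_{g-2} + n'_{g-3} \ \le\ 2\,n'_{g-1},
\]
the last step because $n'_{g-2}+n'_{g-3} \le n'_{g-1}$ by Corollary~\ref{cor n'} applied at $g-1$. Hence $n'_{g-1} \ge \tfrac12 n'_g$ and therefore $n'_{g+1} \ge \tfrac32 n'_g$ for all $g \ge 3$. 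Writing $n_g = n'_g + \delta_g$, where $\delta_g$ is the number of gapsets of genus $g$ and depth $q \ge 4$, the inequality $n'_{g+1} \ge 1.38\, n_g$ follows as soon as $n'_{g-1} \ge 0.38\, n'_g + 1.38\, \delta_g$; using $n'_{g-1} \ge \tfrac12 n'_g$ this reduces to an \emph{explicit} bound of the shape $\delta_g \le c\, n'_g$ with $c$ a small positive constant. For the sharper constant $1.5$ one additionally needs a lower estimate for the growth ratio $n'_{g+1}/n'_g$ strictly above $1.5$ (consistent with the expected limit $(1+\sqrt5)/2$), together with a correspondingly smaller $c$; such a ratio estimate does not follow from the sandwich above and would have to be obtained separately, which is itself a nontrivial refinement.

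Thus the crux is an \emph{effective} version of Zhai's theorem \cite{Z} that the proportion of genus-$g$ gapsets with $q \le 3$ tends to $1$: one needs an explicit threshold $g_1$ and constant $c$ with $\delta_g \le c\, n'_g$ for all $g \ge g_1$. I expect this to be the main obstacle, for two reasons. First, Zhai's argument is delicate and carries no explicit constants, so it would have to be revisited — presumably by bounding $|\Gamma(g,q)|$ for each fixed $q \ge 4$ via the canonical partition and $m$-filtration machinery of Section~\ref{section gapsets}, which tightly constrains the chains $F_0 \supseteq F_1 \supseteq \cdots$ and so caps the number of admissible filtrations of each depth. Second, even a clean bound $\delta_g \le c\, n'_g$ takes effect only for $g$ beyond some threshold $g_1$, and one must ensure $g_1$ does not exceed the range in which $n_g$ can be computed; since $\delta_g/n'_g$ decreases slowly, bridging that gap may force either pushing the computation of $n_g$ further or strengthening the finite-range analysis (for instance treating the depth-$4$ contribution $|\Gamma(g,q=4)|$ separately, where more structural information is available). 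Once both ingredients — the effective bound on $\delta_g$ and the verified base range — are in place, the two inequalities of Conjecture~\ref{n'/n} follow by a routine induction on $g$ from the recursive estimates for $n'_g$ above.
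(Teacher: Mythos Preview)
The statement you were asked to prove is labeled a \emph{Conjecture} in the paper, and the paper does not prove it. The only evidence the authors provide is computational: using the values of $n'_g$ in the Appendix and the values of $n_g$ from~\cite{FH}, they verify the inequalities for $1 \le g \le 59$ and display the ratios $n'_{g+1}/n_g$ in Figure~\ref{quo}. There is no argument for general $g$.

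You correctly recognized this, noting at the outset that the conjecture implies $n_{g+1} \ge 1.38\, n_g$ and hence the still-open Conjecture~\ref{simple growth}, and you explicitly framed your write-up as a roadmap rather than a proof. That framing is appropriate. Your derivation of $n'_{g+1} \ge \tfrac{3}{2}\, n'_g$ from the sandwich in Theorem~\ref{theorem main} is clean and correct, and your reduction of the problem to an effective bound $\delta_g \le c\, n'_g$ (an explicit form of Zhai's $n'_g/n_g \to 1$) is exactly the right diagnosis of the obstacle. You are also right that the bare inequality $n'_{g+1} \ge \tfrac{3}{2}\, n'_g$ leaves no room to absorb $1.5\,\delta_g$, so the sharper constant $1.5$ requires additional input on the growth ratio. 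In short: there is nothing to compare your proposal against, because the paper offers no proof; your analysis of what a proof would require is sound.
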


The available data, namely the values of $n'_g$ for $1 \le g \le 60$ given in the Appendix and the values of $n_g$ given in~\cite{FH}, show that \emph{Conjecture~\ref{n'/n} holds for all $1 \le g \le 59$.}

\smallskip
Indeed, for $1 \le g \le 59$, the minimum of $n_{g+1}' / n_g$ is found to be attained at $g=18$, for which we have $n_{19}'/n_{18} \, \sim \, 1.3806341$. For $1 \le g \le 5$, the values of $n_{g+1}' / n_g$  are
$$
2,\, 2,\, 1.5,\, 1+4/7,\, 1+2/3,
$$
respectively, and yield $n_{g+1}' / n_g \ge 1.5$ in this range. A graphical representation of $n_{g+1}' / n_g$ in the range $6 \le g \le 59$ is given in Figure~\ref{quo}. The available data shows that $n_{g+1}' / n_g \ge 1.5$ holds for all $49 \le g \le 59$, and most probably beyond as well.

\begin{center}
\begin{figure}\label{quo}
\includegraphics[width=12cm]{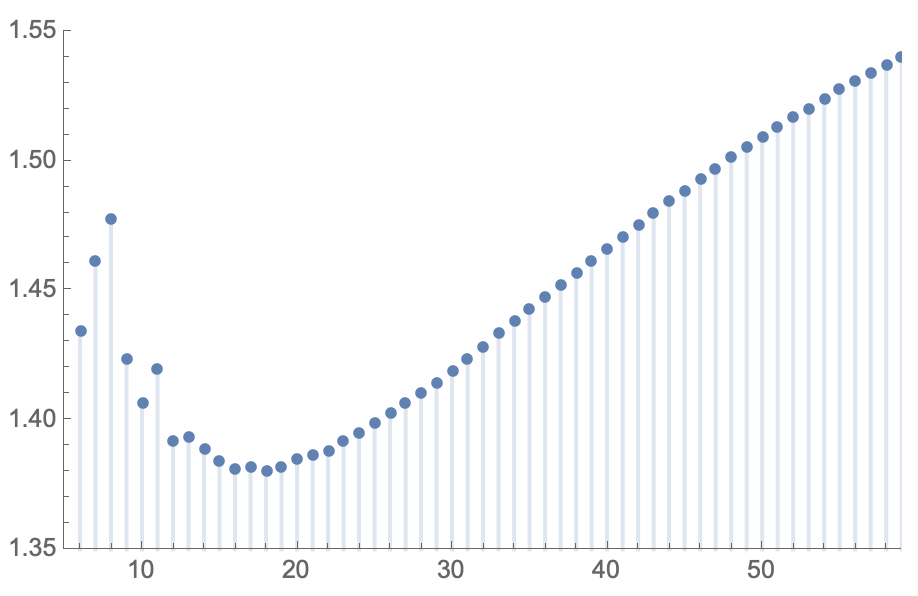}
\caption{The quotient $n_{g+1}' / n_g$ for $6 \le g \le 59$}
\label{quo}
\end{figure}
\end{center}

Our second conjecture states that the growth of the number of vertices of given genus should hold for many infinite subtrees of $\mathcal{T}$.

\begin{conjecture} Let $S$ be a numerical semigroup such that the subtree $\mathcal{T}(S)$ of $\mathcal{T}$ rooted at $S$ is infinite. Then the successive levels of $\mathcal{T}(S)$ have non-decreasing cardinalities.
\end{conjecture}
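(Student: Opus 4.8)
The plan is to reduce the statement to the existence, for every numerical semigroup $S$ with $\mathcal{T}(S)$ infinite and every $\ell \ge 0$, of an injection from the $\ell$th level of $\mathcal{T}(S)$ into the $(\ell+1)$st. First I would reformulate these levels in the language of gapsets. Put $G = \N \setminus S$, let $f = \max G$ be the Frobenius number of $S$ and $c = f+1$ its conductor. Since taking a parent deletes the current maximal element, an easy induction based on Lemma~\ref{initial} identifies the $\ell$th level of $\mathcal{T}(S)$ with
$$
L_\ell(G) \,=\, \{\, H \textrm{ a gapset} \,:\, G \subseteq H, \ H \setminus G \subseteq [c,\infty[, \ |H| = |G| + \ell \,\}.
$$
Indeed, if $H \in L_\ell(G)$ with $\ell \ge 1$ then $\max H \in H \setminus G$, so $H \setminus \{\max H\} \in L_{\ell-1}(G)$ by Lemma~\ref{initial} and $H$ is one of its children; conversely every descendant of $S$ at distance $\ell$ is of this form. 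So the problem becomes purely combinatorial: produce an injection $L_\ell(G) \hookrightarrow L_{\ell+1}(G)$.

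Here is the key elementary observation. Since distinct gapsets have distinct parents, \emph{any} rule assigning to each $H \in L_\ell(G)$ one of its children in $\mathcal{T}$ is automatically injective. Hence $|L_{\ell+1}(G)| = \sum_{H \in L_\ell(G)} c(H)$, where $c(H)$ denotes the number of children of $H$, and the desired inequality is equivalent to the average number of children over $L_\ell(G)$ being at least $1$. The sole obstruction is the presence of \emph{leaves} in $L_\ell(G)$, i.e. gapsets with no child; such leaves genuinely occur inside infinite subtrees $\mathcal{T}(S)$ --- for example $\vs{2,7}$ lies in $\mathcal{T}(\vs{2,3})$. Thus the real content of the conjecture is a \emph{domination of leaves by surplus children}: for each $\ell$ one must injectively match the leaves of $L_\ell(G)$ to distinct children, at level $\ell+1$, of the fertile vertices of $L_\ell(G)$ (those with $c(H) \ge 2$). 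Note that the instance $S = \vs{2,3}$ of the conjecture, for which $\mathcal{T}(\vs{2,3})$ is all of $\mathcal{T}$ minus its root, amounts to the open Conjecture~\ref{simple growth} ($n_{g+1} \ge n_g$), settled by Zhai only for large $g$; so the full conjecture is at least that hard, which already signals where the difficulty lies.

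To attack the leaf-domination I would try two angles. The first uses the larger gapsets graph of Section~\ref{section graph}, in which, by the Remark there, \emph{every} vertex has a child: if one could show that $\mathcal{T}(S)$ is ``graph-closed enough'' --- e.g.\ that for each $H \in L_\ell(G)$ some neighbour of $H$ at the next level of the gapsets graph still lies in $L_{\ell+1}(G)$ --- one would obtain the injection at once. The second is a direct surgery in the spirit of Sections~\ref{section lower} and~\ref{section upper}: partition $L_\ell(G)$ according to the membership pattern of $f_H+1, f_H+2, \dots$ just above $f_H = \max H$, and in each class either append a suitable element or perform a controlled modification yielding a member of $L_{\ell+1}(G)$, thereby generalizing the trichotomy $X_1 \sqcup X_2 \sqcup X_3$ used there for depth $q \le 3$. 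It is also natural to exploit the stratification of $\mathcal{T}(S)$ by multiplicity: along any downward path the multiplicity is non-decreasing and increases only when passing through a depth-$1$ (ordinary) gapset, and ordinary gapsets are extremely fertile, which is morally the source of the surplus that must absorb the leaves.

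The main obstacle is that there is currently no gapset-preserving injection that works uniformly in the depth. The operations that drive the depth-$q\le 3$ results --- the maps $\alpha_1,\alpha_2$ and the trimming of the maximal elements of the pieces of a gapset filtration --- provably destroy the gapset property already for depth $q \ge 4$, the smallest witnesses being $(12)(1)^3$ and $(123)(13)^3$; yet a general infinite $\mathcal{T}(S)$ contains gapsets of arbitrarily large depth and, as the multiplicity analysis above shows, of arbitrarily large multiplicity. Compounding this, it is unclear how to use the hypothesis ``$\mathcal{T}(S)$ infinite'' quantitatively: it rules out the trivial counterexamples coming from finite subtrees but gives no visible handle on the density of leaves at a given level, which is precisely what a domination argument would need. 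For these reasons I expect the leaf-domination step --- equivalently, a case analysis producing a child of \emph{somebody} for every vertex, leaves included --- to be the genuine heart, and the hardest part, of any proof.
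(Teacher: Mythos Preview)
The statement you are attempting to prove is labelled a \emph{Conjecture} in the paper and is not accompanied by any proof there; the authors present it as open, remark that its special case $S=\vs{2,3}$ is precisely Conjecture~\ref{simple growth} ($n_{g+1}\ge n_g$ for all $g$), and only cite partial results for the subtrees $\mathcal{T}(S_m)$ with $m\le 5$. So there is no ``paper's proof'' to compare your proposal against.

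Your write-up is not a proof either, and to your credit you say so explicitly: after a correct reformulation of the levels $L_\ell(G)$ in gapset language and the correct observation that $|L_{\ell+1}(G)|=\sum_{H\in L_\ell(G)} c(H)$, you reduce the question to a ``leaf-domination'' matching and then concede that neither of your two suggested angles closes this gap. That diagnosis is accurate --- the depth-$q\le 3$ machinery of the paper does break for $q\ge 4$, and the hypothesis ``$\mathcal{T}(S)$ infinite'' gives no quantitative control on leaves --- but it means the proposal is a research plan, not a proof. One small correction: your illustrative leaf $\vs{2,7}$ is not a leaf of $\mathcal{T}$ (it has the child $\vs{2,9}$); a genuine leaf inside the infinite subtree $\mathcal{T}(\vs{2,3})$ is $\vs{3,4}$, whose gapset $\{1,2,5\}$ has no children.
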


An interesting particular case is that of $S_m = \{0\} \cup [m,\infty[$. The conjecture seems to hold for $S_m$ and any $m \ge 2$. Since every numerical semigroup $S \not= \N$ is a descendant of some $S_m$ with $m \ge 2$, the validity of the above conjecture for $S_m$ for all $m \ge 2$ would imply the  conjecture $n_{g} \ge n_{g-1}$ for all $g \ge 1$. The cases $m=3,4,5$ have been established in~\cite{GMR} with computational methods. Our characterization above for $m=3,4$ yields a simpler proof in these two cases. Finally, it would be very interesting to determine the asymptotic growth rate of these particular subtrees $\mathcal{T}(S_m)$.

\section{Appendix}
Here is the sequence of $n'_g$ for $g=1,\dots,60$, computed using the fast algorithms developped in~\cite{FH} and made on CALCULCO, the high performance computing platform of our university~\cite{cal}.

{\small
\begin{figure}[h]
\begin{quote}
1, 2, 4, 6, 11, 20, 33, 57, 99, 168, 287, 487, 824, 1395, 2351, 3954, 6636, 11116, 18593, 31042, 51780, 86223, 143317, 237936, 394532, 653420, 1080981, 1786328, 2948836, 4863266, 8013802, 13194529, 21707242, 35684639, 58618136, 96221845, 157840886, 258749944, 423906805, 694076610, 1135816798, 1857750672, 3037078893, 4962738376, 8105674930, 13233250642, 21595419304, 35227607540, 57443335681, 93635242237, 152577300884, 248541429293, 404736945777, 658898299876, 1072361202701, 1744802234628, 2838171714880, 4615547228454, 7504199621406, 12197944701688.
\end{quote}
\caption{The sequence of $n'_g$ for $g=1,\dots,60$}
\label{seq n'g}
\end{figure}
}


\end{document}